\numberwithin{equation}{section}
\DeclareSymbolFontAlphabet{\mathbb}{AMSb} 
\DeclareSymbolFontAlphabet{\mathbbl}{bbold} 
\newcommand{\Prism}{{\mathlarger{\mathbbl{\Delta}}}}
\theoremstyle{remark}    
\newtheorem*{sketch}{Sketch of the proof}  
\theoremstyle{plain}
\newtheorem{Th}{Theorem}[section]
\newtheorem{Lemma}[Th]{Lemma}
\newtheorem{Prop}[Th]{Proposition}
\theoremstyle{definition}
\newtheorem{Def}[Th]{Definition}
\newtheorem{Rem}[Th]{Remark}
\newtheorem{Ques}[Th]{Question}
\newtheorem{?}[Th]{Problem}
\newtheorem{Ex}[Th]{Example}
\newcommand{\perf}{{\rm{perf}}}
\newcommand{\Tor}{{\rm{Tor}}}
\newcommand{\Spec}{{\rm{Spec}}}
\newcommand{\Spf}{{\rm{Spf}}}
\newcommand{\Rad}{{\rm{Rad}}}
\newcommand{\colim}{{\rm{colim}}}
\newcommand{\stacktag}[1]{\cite[\href{https://stacks.math.columbia.edu/tag/#1}{Tag #1}]{stacks-project}}
\begin{document}

\setcounter{page}{1}

\title{Prismatic Cohomology and De~Rham--Witt Forms}

\author{Semen Molokov}
\address{HSE University, Department of Mathematics, Ulitsa Usacheva 6, Moscow 119048, Russia}
\email{sam\_molokov@yahoo.com}

\setcounter{tocdepth}{1}

\begin{abstract}
For any prism $(A, d)$, we construct an analogue of Fontaine's map $W_r(A/d) \to A/d\phi(d)\cdots\phi^{r-1}(d)$. Subsequently, we define a canonical map from de~Rham--Witt forms to prismatic cohomology in the perfect case and prove that it is an isomorphism. Using this result, we obtain an explicit description of the prismatic cohomology $H^i((S/A)_\Prism,\mathcal{O}_\Prism/d\phi(d)\cdots\phi^{n-1}(d))$, where $S$ is the $p$-completion of a polynomial algebra over $A/d$.
\end{abstract}

\maketitle

\smallskip
\noindent \textbf{Keywords.} Prisms, prismatic cohomology, Witt vectors, de~Rham--Witt forms

\smallskip
\noindent \textbf{Mathematics Subject Classification.} 14F20, 14F30

\tableofcontents
\section{Introduction}
This paper focuses on prismatic cohomology, a recently developed cohomology theory for schemes over $p$-adic rings, introduced by B.~Bhatt and P.~Scholze in~\cite{Prisms}. This theory specializes to most known integral $p$-adic cohomology theories. In the particular case when the base ring is $R = \mathcal{O}_C$, where $C$ is a complete algebraically closed extension of $\mathbb{Q}_p$, the similar construction was previously obtained by B.~Bhatt, M.~Morrow, and P.~Scholze in~\cite{BMS_I}, where they establish a comparison isomorphism with continuous de~Rham--Witt forms. However, this result has not yet been extended to the general setting of prismatic cohomology.

In this paper, we aim to make progress in this direction. We begin with the following question:

\begin{Ques}\label{main}
Let $(A,d)$ be an oriented prism, and let $S$ be a $p$-completely smooth $A/d$-algebra. Then, for any integer $r\geq 1$, there is a functorial isomorphism of $A/d\phi(d)\cdots\phi^{r-1}(d)$-modules
\[W_r\Omega^{i, cont}_{S/(A/d)}\widehat{\otimes}^L_{W_r(A/d)}A/d\phi(d)\cdots \phi^{r-1}(d) \to H^i((S/A)_\Prism,\mathcal{O}_\Prism/d\phi(d)\cdots\phi^{r-1}(d)).
\]
\end{Ques}

If this statement holds, it requires that, for any oriented prism $(A, d)$ and all integers $r\geq 1$, there are canonical maps
\[
W_r(A/d)\to A/d\phi(d)\cdots\phi^{r-1}(d).
\]
The construction of these maps is the first result of this paper. Our approach relies on reduction to the case of perfect prisms, where the existence of such maps was previously established in \cite{BMS_I}. We also present an alternative construction, provided by an anonymous referee, in Appendix~\ref{appendix}.

After constructing these maps, it is then a formal procedure to obtain a map
\[ W_r(S)\to H^0((S/A)_\Prism, \mathcal{O}_{\Prism}/d\phi(d)\cdots\phi^{r-1}(d)).
\]

Moreover, similar to the Hodge--Tate complex $\overline{\Prism}_{S/A}$, the prismatic cohomology groups $\bigoplus_{n\geq 0} H^n((S/A)_\Prism, \mathcal{O}_\Prism/d\phi(d)\cdots\phi^{r-1}(d))\{n\}$ naturally form a commutative differential graded algebra, with a Bockstein homomorphism serving as the differential. From this, one can conclude that the desired maps from de~Rham--Witt forms must be unique if they exist.

The next result of this paper is the existence of these maps for perfect prisms. In this case, we prove the following theorem:

\begin{Th}\label{perfect_case}
Let $(A,d)$ be a perfect prism such that $A/d$ is $p$-torsion-free, and let $S$ be a $p$-completely smooth $A/d$-algebra. Then we have the functorial isomorphism
\[W_n\Omega_{S/(A/d)}^{i, cont}\to H^i(\Prism_{S/A}\otimes^L_A A/d\phi(d)\cdots\phi^{n-1}(d)).
\]
\end{Th}

This theorem is proved by reduction to the result of B.~Bhatt, M.~Morrow and P.~Scholze in~\cite{BMS_I}, using Andr\'e's lemma (see \cite[Theorem~7.14]{Prisms}).

In the general case, Peter Scholze has informed us that the statement in Question~\ref{main} is false. It remains unclear how the left-hand side ought be modified. Nevertheless, when $S=A/d\langle T_1,\ldots, T_n\rangle$ is a $p$-completed polynomial algebra, both sides of Question~\ref{main} admit a similar explicit description. Indeed, it is known from \cite{LZ} that $W_r\Omega^i_{S/(A/d)}$ is a certain infinite direct sum of copies of $W_i(A/d)$ for $1\leq i\leq r$, and it turns out that $H^i((S/A)_\Prism, \mathcal{O}_\Prism/d\phi(d)\cdots\phi^{r-1}(d))$ is a $p$-completed infinite direct sum, with the same index set, of copies of $A/d\phi(d)\cdots\phi^{i-1}(d)$. This constitutes the third main result of the paper.

\subsection{Acknowledgments} We are deeply grateful to Peter Scholze for proposing this question, as well as for his invaluable help and useful comments. We also would like to thank Vadim Vologodsky for his interest in the project and the anonymous referee for the detailed report containing the alternative proof of Proposition~\ref{map}.

\section{Prisms and distinguished elements}
Fix a prime number $p$. In this section, we present fundamental definitions and results from \cite{Prisms} that we will use extensively throughout this paper. We also draw significantly from the lectures in \cite{Bhatt}.

\subsection{$\delta$-rings}
This subsection presents key facts about $\delta$-rings, a concept introduced by Joyal in \cite{Joy}. This notion provides an effective framework for studying rings with a lift of Frobenius modulo $p$. For a comprehensive treatment of this theory, we refer to \cite{Borg}.

\begin{Def}
A $\delta$-ring is a pair $(A,\delta)$ where $A$ is a commutative ring and $\delta \colon A\to A$ is a map of sets with $\delta(0)=\delta(1)=0$, satisfying the following two identities:
\[
  \delta(xy)= x^p\delta(y)+y^p\delta(x) + p\delta(x)\delta(y)
\]
and
\[
\delta(x+y) = \delta(x)+\delta(y)+\frac{x^p+y^p-(x+y)^p}{p}.
\]
\end{Def} 

In the literature, a $\delta$-structure is also known as a $p$-derivation. The main feature of the $\delta$-structure is that it provides a Frobenius lift:

\begin{Lemma}\label{FROB} 
Let $A$ be a commutative ring.

\begin{enumerate}[(i)]
  \item If $\delta\colon A\to A$ provides a $\delta$-structure on $A$, then the map $\phi\colon A\to A$ defined by $\phi(x)=x^p+p\delta(x)$ is an endomorphism of $A$ that lifts the Frobenius on $A/p$.

  \item  When $A$ is $p$-torsion-free, the construction described above gives a bijection between $\delta$-structures on $A$ and Frobenius lifts on $A$.
  
  \item If $A$ is a $\delta$-ring, then $\phi\colon A \to A$ is a $\delta$-map, i.e., $\phi(\delta(x))=\delta(\phi(x))$ for any $x\in A$.
\end{enumerate}
\end{Lemma}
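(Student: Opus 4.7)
The plan is to prove the three parts in sequence, with parts (1) and (2) being direct verifications from the axioms and part (3) reduced to the $p$-torsion free case via a universal $\delta$-ring argument.

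For part (1), I would check that $\phi(x) := x^p + p\delta(x)$ preserves $1$, addition, and multiplication. Preservation of $1$ is immediate from $\delta(1)=0$. For multiplicativity, expanding $\phi(x)\phi(y) = (x^p + p\delta(x))(y^p + p\delta(y))$ and comparing with $\phi(xy) = (xy)^p + p\delta(xy)$ recovers exactly the first $\delta$-axiom. For additivity, $\phi(x+y) = (x+y)^p + p\delta(x+y)$, and substituting the second axiom collapses the expression to $x^p + y^p + p\delta(x) + p\delta(y) = \phi(x)+\phi(y)$. The congruence $\phi(x)\equiv x^p \pmod p$ is built into the definition.

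For part (2), given a Frobenius lift $\phi$ on a $p$-torsion free ring $A$, I set $\delta(x) := (\phi(x)-x^p)/p$; this is well-defined by torsion-freeness and the fact that $\phi$ is a Frobenius lift. After multiplying by $p$, the two $\delta$-axioms become precisely the statements that $\phi$ is additive and multiplicative, so they hold automatically. This construction is manifestly inverse to the one in (1), because both directions are rigidified by dividing (resp.\ multiplying) by $p$ in a torsion-free module.

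For part (3), the identity $\phi(\delta(x))=\delta(\phi(x))$ is a universal identity, so it suffices to verify it in the free $\delta$-ring $\mathbb{Z}\{x\}$ on one generator. This ring is the polynomial ring $\mathbb{Z}[x,\delta(x),\delta^2(x),\ldots]$ and in particular $p$-torsion free, so part (2) applies and gives $\delta(y)=(\phi(y)-y^p)/p$. Then
\[
\phi(\delta(x)) = \frac{\phi^2(x)-\phi(x^p)}{p} = \frac{\phi(\phi(x))-\phi(x)^p}{p} = \delta(\phi(x)),
\]
since $\phi$ is a ring map and hence $\phi(x^p)=\phi(x)^p$. Applying the unique $\delta$-ring homomorphism $\mathbb{Z}\{x\}\to A$ sending $x\mapsto a$ transports the identity to arbitrary $a$ in any $\delta$-ring $A$. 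The one genuine obstacle is part (3) when $A$ has $p$-torsion: a naive direct expansion leads to messy polynomial identities, but the universal reduction to the $p$-torsion free case bypasses these entirely.
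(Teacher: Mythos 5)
Your proof is correct and follows the standard approach; the paper itself does not supply a proof but simply refers to \cite{Prisms}, where parts (1) and (2) are direct verifications from the $\delta$-ring axioms exactly as you describe, and part (3) is reduced to the $p$-torsion free free $\delta$-ring $\mathbb{Z}\{x\}$ via the universal property. Your key computation $p\,\phi(\delta(x)) = \phi(\phi(x)-x^p) = \phi^2(x)-\phi(x)^p = p\,\delta(\phi(x))$ followed by cancellation of $p$ is precisely the intended argument.
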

The proof is standard and can be found in \cite{Bhatt} and \cite{Prisms}. 

\begin{Ex} Lemma~\ref{FROB} provides several simple examples of $\delta$-rings when $A$ is $p$-torsion-free:
\begin{enumerate}[(i)]
    \item The ring $\mathbb{Z}$ with identity $\phi(x)=x$. It is easy to see that this ring is an initial object in the category of $\delta$-rings.
    \item Let $A=\mathbb{Z}[x]$. For any $g\in A$, there exists a unique Frobenius lift $\phi$ determined by $\phi(x) = x^p + pg$. Thus, this yields a unique $\delta$-structure on $A$ with $\delta(x)=g$.
    \item If $k$ is a perfect field of characteristic $p>0$, then the ring of Witt vectors $W(k)$ admits a unique standard lift of Frobenius, which induces a unique $\delta$-structure on $W(k)$.
\end{enumerate}
\end{Ex}

\begin{Def}
A $\delta$-ring $A$ is called perfect if $\phi$ is an isomorphism.
\end{Def}

\subsection{Prisms and distinguished elements}
In this subsection, we discuss definitions and some properties of prisms and distinguished elements. We assume that all rings that appear are $p$-local, i.e., $p\in \Rad(A)$ where $\Rad(A)$ is the Jacobson radical of $A$. This condition is clearly satisfied when $A$ is $p$-adically complete.

\begin{Def}
Let $A$ be a $\delta$-ring. An element $d\in A$ is called distinguished or primitive if $\delta(d)$ is a unit in $A$.
\end{Def}

Note that since $\delta$ commutes with $\phi$ by Lemma~\ref{FROB} and all our rings are $p$-local, we see that $d$ is distinguished if and only if $\phi(d)$ is distinguished.

\begin{Ex} The main examples related to cohomology theories are:
\begin{enumerate}[(i)]
    \item  Let $(A, d)$ be $(\mathbb{Z}_p, p)$. Then $p$ is distinguished since $\delta(p)=1-p^{p-1}\in\mathbb{Z}_p^*$. This example gives rise to the crystalline cohomology theory.
    \item Let $(A,d)$ be $(\mathbb{Z}_p[[q-1]], [p]_q)$, where $[p]_q=\frac{q^p-1}{q-1}$ with $\delta$-structure given by $\phi(q)=q^p$. 
    Since $A$ is $(q-1)$-adically complete, an element of $A$ is a unit if and only if it is a unit modulo $(q-1)$. We have $\delta([p]_q)\equiv\delta(p)\mod{q-1}$, and thus the distinguishedness of $[p]_q$ follows from (i). This example leads to the $q$-de~Rham cohomology theory.
    \item Let $C/\mathbb{Q}_p$ be a perfectoid field and $(A, d)$ be $(A_{\inf}(\mathcal{O}_C), \xi)$, where $\xi$ is any generator of the kernel of Fontaine's map $A\to \mathcal{O}_C$. Then $A$ admits a unique $\delta$-structure induced by the Witt vector Frobenius, which is precisely the lift of Frobenius. The distinguishedness of $\xi$ can be seen similarly to part (ii). This construction gives the $A_{\inf}$-cohomology theory. 
\end{enumerate}
\end{Ex}

\begin{Def}[Category of prisms]
Fix a pair $(A,I)$ consisting of a $\delta$-ring $A$ and an ideal $I \subset A$. The collection of all such pairs forms a category called the category of $\delta$-pairs.
\begin{enumerate}[(i)]
\item A $\delta$-pair $(A, I)$ is a prism if $I\subset A$ is an invertible ideal such that $A$ is derived $(p, I)$-complete and $p\in IA+\phi(I)A$.
\item A map $(A, I)\to (B, J)$ is called (faithfully) flat if the map $A\to B$ is $(p,I)$-com\-pletely (faithfully) flat, i.e., $A/(p, I)\to B\otimes_A^L A/(p,I)$ is (faithfully) flat.
\item A prism $(A,I)$ is called
\begin{itemize}
    \item bounded if $A/I$ has bounded $p^\infty$-torsion, i.e., $A/I[p^\infty]=A/I[p^c]$ for some $c\geq 0$.
    \item crystalline if $I=(p)$.
    \item orientable if the ideal $I$ is principal; the choice of a generator is called an orientation.
    \item perfect if $A$ is a perfect $\delta$-ring, i.e, $\phi$ is an isomorphism.
\end{itemize}
\end{enumerate}
\end{Def}

\begin{Rem}
By \cite[Lemma~3.1]{Prisms}, the condition $p\in IA+\phi(I)A$ is equivalent to the fact that $I$ is pro-Zariski locally on $\Spec(A)$ generated by a distinguished element. Therefore, it is usually harmless to assume that $I=(d)$, i.e., that $(A, I)$ is orientable.
\end{Rem}

\begin{Ex}
Let $A$ be a $p$-torsion-free and $p$-complete $\delta$-ring. Then the pair $(A, (p))$ is a crystalline prism. Conversely, any crystalline prism arises in this way.
\end{Ex}

\begin{Ex}
Let $A_0=\mathbb{Z}_{(p)}\{d, \delta(d)^{-1}\}$ be the displayed localization of the free $\delta$-ring on a variable $d$. We denote by $A$ the $(p, d)$-completion of $A_0$. Then the pair $(A, (d))$ is a bounded prism, and moreover, it is the universal oriented prism.
\end{Ex}

Let us now recall the relationship between perfectoid rings and perfect prisms, starting with the notion of the perfection of a prism.  

\begin{Lemma}
Let $(A,I)$ be a prism, and let $A_{\perf}=\colim_{\phi} A$ be the perfection of~$A$. Then $IA_{\perf}=(d)$ is generated by a distinguished element, both $d$ and $p$ are non-zero-divisors in $A_{\perf}$, and $A_{\perf}/d[p^\infty]=A_{\perf}/d[p]$. In particular, the derived $(p,I)$-completion $A_\infty$ of $A_{\perf}$ agrees with the classical one, and $(A_\infty, IA_\infty)$ is the universal perfect prism under $(A,I)$.
\end{Lemma}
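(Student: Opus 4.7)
The plan is to establish each assertion in the order stated. By the remark recalled just before the lemma, $I$ is pro-Zariski locally generated by a distinguished element, and since passage to the perfection, derived completion, and all conditions to be verified are local for the Zariski topology, we may assume $I=(d)$ with $d$ distinguished. Because $\delta$ commutes with $\phi$ by Lemma \ref{FROB}, and $\phi$ becomes an automorphism of $A_{perf}$, the image of $\delta(d)$ remains a unit in $A_{perf}$; thus $IA_{perf}=(d)$ with $d$ distinguished, which handles the first assertion.

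For the nonzerodivisor claims, I would use that an element of the filtered colimit $A_{perf}$ vanishes iff some $\phi^n$-image of a representative is zero in $A$. If $d\cdot x=0$ in $A_{perf}$ with $x$ represented by $y\in A$ at some level, then $\phi^n(d)\phi^n(y)=0$ in $A$ for some $n$. Since $\phi^n(d)$ is again distinguished, it is a nonzerodivisor in $A$ (this is the standard fact that a distinguished element in a derived $(p,I)$-complete $\delta$-ring is a nonzerodivisor, cf.\ [\cite{Prisms}, Lemma 2.25]), so $\phi^n(y)=0$ and hence $x=0$. For $p$, the prism identity $p\in (d,\phi(d))$, combined with the invertibility of $\phi$ on $A_{perf}$, lets us express $p$ in terms of $d$ and its Frobenius shifts; together with the nonzerodivisor property just obtained for $d$, this forces $p$ to act injectively on $A_{perf}$.

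The torsion statement $(A_{perf}/d)[p^\infty]=(A_{perf}/d)[p]$ is the most delicate point. The strategy is to exploit the relation $\phi(d)=d^p+p\delta(d)$ with $\delta(d)\in A^\times$, which gives $p\equiv -\phi(d)/\delta(d)\pmod{d^p}$. If $p^k y\in dA_{perf}$ with $k\geq 2$, then writing $y=\phi(y_1)$ (possible because $\phi$ is surjective on $A_{perf}$) and substituting via the preceding congruence, one can descend the exponent of $p$ by one while keeping the same class modulo $d$; iterating produces the bound. Granted this, the derived and classical $(p,d)$-completions of $A_{perf}$ coincide by the standard criterion for rings with bounded $(p,d)^\infty$-torsion.

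Finally, the classical completion $A_\infty$ inherits the $\delta$-structure and perfection (the Frobenius automorphism extends by continuity), while the Cartier divisor and prism conditions are preserved under the completion, so $(A_\infty,IA_\infty)$ is a perfect prism receiving a canonical map from $(A,I)$. Universality is then formal: any map $(A,I)\to (B,J)$ into a perfect prism factors uniquely through $A_{perf}$ by the universal property of perfection in the category of $\delta$-rings, and then through $A_\infty$ since $B$ is derived $(p,J)$-complete. The principal obstacle is the bounded $p^\infty$-torsion assertion, as everything else is essentially a formal consequence of it together with the standard behavior of perfection and derived completion.
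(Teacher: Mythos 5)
The paper itself does not prove this lemma; it cites Bhatt--Scholze, Lemma 3.9, so the comparison is between your argument and theirs. Your decomposition of the problem is sensible and you correctly identify the torsion bound as the crux, but there are genuine gaps in two of the three substantive steps.

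\textbf{The nonzerodivisibility of $p$.} Your reasoning is that $p \in (d, \phi(d))$ and $d$, $\phi(d)$ are nonzerodivisors, hence $p$ acts injectively. This is not a valid inference: a linear combination $ad + b\phi(d)$ of nonzerodivisors need not be a nonzerodivisor. The correct argument uses the $\delta$-structure directly and needs neither $d$ nor completeness, only perfection: if $px = 0$, apply $\delta$ and expand using $\delta(p) = 1 - p^{p-1}$ to get
\[
0 = \delta(px) = \phi(x) - p^{p-1}x^p,
\]
so $\phi(x) = p^{p-1}x^p = p^{p-2}(px)x^{p-1} = 0$, and injectivity of $\phi$ forces $x = 0$. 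Thus every perfect $\delta$-ring is $p$-torsion free, independently of the distinguished element.

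\textbf{The torsion bound $(A_{perf}/d)[p^\infty] = (A_{perf}/d)[p]$.} Your sketch is to write $y = \phi(y_1)$, substitute $p \equiv \delta(d)^{-1}\phi(d) \pmod{d^p}$ into $p^k y \in dA_{perf}$, and ``descend the exponent by one.'' If you carry this out, you get $\phi(d)\cdot p^{k-1}y \in (d)$; but $\phi(d) \equiv p\delta(d) \pmod{d}$ with $\delta(d)$ a unit, so $\phi(d)z \in (d)$ is \emph{equivalent} to $pz \in (d)$, and you recover exactly $p^ky \in (d)$. The step is circular, not a descent. This is the genuinely delicate point of the lemma (it is why ``bounded'' is a nontrivial condition on prisms), and the proposal does not supply an argument for it.

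Two smaller remarks. For $d$ being a nonzerodivisor in $A_{perf}$, your colimit argument is correct, but you should note explicitly that $A$ being derived $(p,I)$-complete implies it is derived $(p,\phi^n(d))$-complete (the radicals of $(p,d)$ and $(p,\phi^n(d))$ agree, since $\phi(d)\equiv d^p \pmod p$), which is what licenses applying the nonzerodivisor lemma to $\phi^n(d)$. And for the opening reduction: being generated by a distinguished element is a statement about \emph{global} principality of $IA_{perf}$, which is not a Zariski-local property, so ``we may assume $I = (d)$'' needs justification beyond the pro-Zariski-local description of $I$ in $A$; in Bhatt--Scholze the global triviality of $IA_{perf}$ (or at least of $IA_\infty$) ultimately comes from the structure theory of perfect prisms rather than from gluing local generators.
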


\begin{proof}
See \cite[Lemma~3.9]{Prisms}.
\end{proof}

\begin{Th}\label{Cat}
The following two categories are equivalent:
\begin{itemize}
    \item The category of (integral) perfectoid rings $R$.
    \item The category of perfect prisms $(A,I)$.
\end{itemize}
The equivalence is given by the functors $R \mapsto (A_{\inf}(R),\ker(\theta))$ and $(A,I) \mapsto A/I$, where $A_{\inf}(R)=W(R^\flat)$, and $\theta \colon A_{\inf}(R) \to R$ is the Fontaine's map.
\end{Th}

\begin{proof}
See \cite[Theorem~3.10]{Prisms}.
\end{proof}

\subsection{The prismatic site}
Now we are ready to define the prismatic site of a smooth $A/I$-algebra $R$. In some sense, the definition is a mixed-characteristic analogue of a crystalline site.

\begin{Def}
Let $(A,I)$ be a bounded prism and $R$ be a $p$-completely smooth $A/I$-algebra. The prismatic site of $R$ relative to $A$, denoted $(R/A)_\Prism$, is the category whose objects are prisms $(B, IB)$ over $(A, I)$ together with an $A/I$-algebra map $R\to B/IB$. Morphisms in this category are defined in the obvious way. We write a typical object of this category as $(R\rightarrow
B/IB\leftarrow B)$ and display it as
\[
\xymatrix{ A\ar[d]\ar[rr] & & B \ar[d] \\ A/I\ar[r] & R\ar[r] & B/IB.}
\]
We endow $(R/A)_\Prism$ with the indiscrete topology, so all presheaves are sheaves automatically.
We also define a structure sheaf $\mathcal{O}_\Prism$ and a structure sheaf modulo $I$,
denoted by $\overline{\mathcal{O}}_\Prism$, as functors that send $(R\rightarrow B/IB\leftarrow B)\in(R/A)_\Prism$ to
$B$ and $B/IB$ respectively. Note that $\overline{\mathcal{O}}_\Prism\simeq
\mathcal{O}_\Prism/I\mathcal{O}_\Prism$.
\end{Def}

\begin{Rem}
Strictly speaking, the category defined above is the opposite of what should be called the prismatic site. However, we think that there will not be any confusion because of this abuse of notation. We also refer to covariant functors $\mathcal{O}_\Prism$ and $\overline{\mathcal{O}}_\Prism$ as sheaves on $(R/A)_\Prism$. 
\end{Rem}

\begin{Rem}
The site defined in \cite[Definition~4.1]{Prisms} has a different Grothendieck topology: a map $(A,I)\to(B,IB)$ in $(R/A)_\Prism$ is a flat cover if it is a faithfully flat map of prisms. While this choice of topology changes the resulting topos, it does not affect the prismatic or Hodge--Tate cohomology. This phenomenon is similar to the corresponding situation in crystalline cohomology, where one can define the crystalline cohomology of an affine scheme using the indiscrete topology on the crystalline site while still obtaining the correct crystalline cohomology groups. The topology defined by flat covers also gives better results when replacing the ring $R$ with a smooth $p$-adic formal scheme $X$. To avoid the technical complexities of the Grothendieck topology, we restrict our discussion in this paper to the affine case.
\end{Rem}

\begin{Ex}
Assume that $A/I=R$. Then, the category $(R/A)_\Prism$ identifies with the category of
prisms over $(A,I)$, and its initial object is $(R\simeq A/I\leftarrow A)$.
\end{Ex}

\begin{Def} The prismatic complex $\Prism_{R/A}$ of an algebra $R$ is defined to be $R\Gamma((R/A)_\Prism, \mathcal{O}_\Prism)$. 
This is a $(p,I)$-complete commutative algebra object in $D(A)$. Note that the Frobenius on 
$\mathcal{O}_\Prism$ induces a $\phi$-semi-linear map $\Prism_{R/A}\to\Prism_{R/A}$.

We also define the Hodge--Tate complex $\overline{\Prism}_{R/A}\coloneq R\Gamma((R/A)_\Prism, \overline{
\mathcal{O}}_\Prism)$. There is an obvious isomorphism $\Prism_{R/A}\otimes^L_A A/I\simeq
\overline{\Prism}_{R/A}$.
\end{Def}

\begin{Ex}
If $R=A/I$, then $\Prism_{R/A}\simeq A$ and $\overline{\Prism}_{R/A}\simeq A/I$. This follows immediately from the fact that $(R\simeq A/I\leftarrow A)\in(R/A)_\Prism$ is the initial object in $(R/A)_\Prism$.
\end{Ex}

For our subsequent work, we will need to know that the Hodge--Tate complex localizes for the \'etale topology.

\begin{Lemma}[\'Etale localization]\label{Etale}
Let $R \to S$ be a $p$-completely \'etale map of $p$\nobreakdash-com\-pletely smooth $A/I$-algebras. Then the natural map $\overline{\Prism}_{R/A} \widehat{\otimes}_R^L S \to \overline{\Prism}_{S/A}$ is an isomorphism.
\end{Lemma}

\begin{proof}
See \cite[Lemma~4.21]{Prisms}.
\end{proof}

\section{De~Rham--Witt forms}
As we mentioned in the introduction, if the statement of Question~\ref{main} is true, one requires that, for any orientable prism $(A, d)$ and any $n\geq 1$, there is a canonical map
\[
W_n(A/d)\to A/d\phi(d)\cdots\phi^{n-1}(d).
\]

The construction of these maps is contained in this section. The alternative construction is contained in Appendix~\ref{appendix}. We also present the necessary information on the de~Rham--Witt complex following \cite{Ill} and \cite{LZ}. The section ends with an application of the higher Cartier isomorphism to some form of the de~Rham--Witt comparison for crystalline prisms.

\subsection{The universal map}
Let $(A,d)$ be an oriented prism. For notational convenience, we denote $d\phi(d)\cdots\phi^{n-1}(d)$ by $\tilde{d}_n$. We begin with the following proposition:

\begin{Prop}\label{map}
For any integer $n\geq 1$, there exists a functorial map 
\[W_n(A/d)\to A/d\phi(d)\cdots\phi^{n-1}(d),
\]
which is an isomorphism when $(A, d)$ is perfect. Moreover, in this case, the inverse map can be explicitly written as $\Tilde{\theta}_n\circ\phi=\theta_n\circ\phi^{n-1}$, where $\theta_n$ and $\Tilde{\theta}_n$ are Fontaine's theta maps (see the discussion following \cite[Lemma~3.2]{BMS_I}).
\end{Prop}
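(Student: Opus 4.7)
The plan is to construct the map explicitly in the perfect case using the Fontaine-theoretic input of Section \ref{perf_section}, and then to descend it to arbitrary prisms via a faithfully flat cover by a perfect prism, supplied by André's lemma.

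\textbf{Perfect case.} When $(A,d)$ is perfect, Theorem \ref{Cat} identifies $A$ with $W((A/d)^\flat)$ and $d$ with a generator $\xi$ of $\ker(\theta)$ up to a unit. Lemma \ref{commm} then identifies $\ker(\tilde\theta_n)$ with the principal ideal generated by
\[
\tilde\xi_n \;=\; \phi(\xi)\phi^2(\xi)\cdots\phi^n(\xi) \;=\; \phi\bigl(d\phi(d)\cdots\phi^{n-1}(d)\bigr).
\]
Because $\phi$ is an automorphism of $A$ in the perfect case, applying $\phi^{-1}$ to this kernel shows that $\tilde\theta_n\circ\phi\colon A\to W_n(A/d)$ is surjective with kernel exactly $(d\phi(d)\cdots\phi^{n-1}(d))$, so it descends to the claimed isomorphism. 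The map of the proposition is defined as its inverse, and the explicit formulas in terms of $\tilde\theta$ and $\theta$ are read off directly from this description together with the identity $\theta_r = \tilde\theta_r\circ\phi^r$.

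\textbf{Reduction via a perfect cover.} For a general oriented prism $(A,d)$, I would invoke André's lemma (cf.\ [\cite{Prisms}, Theorem 7.12]) to choose a $(p,d)$-completely faithfully flat map of prisms $(A,d)\to(B,dB)$ with $(B,dB)$ perfect. Set $B^{(2)} := B\,\widehat\otimes_A B$ for the $(p,d)$-completed tensor product. Via Theorem \ref{Cat} the $(p,d)$-completed tensor product of perfectoid rings is again perfectoid, so $(B^{(2)},dB^{(2)})$ is a perfect prism and the first step supplies functorial isomorphisms $\alpha_B$ and $\alpha_{B^{(2)}}$.

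\textbf{Descent and functoriality.} By $(p,d)$-completely faithfully flat descent applied to the quotient $A$-module,
\[
A/(d\phi(d)\cdots\phi^{n-1}(d)) \;=\; \mathrm{eq}\bigl(B/(d\phi(d)\cdots\phi^{n-1}(d))B\;\rightrightarrows\; B^{(2)}/(d\phi(d)\cdots\phi^{n-1}(d))B^{(2)}\bigr).
\]
The composite $W_n(A/d)\to W_n(B/dB)\xrightarrow{\alpha_B} B/(d\phi(d)\cdots\phi^{n-1}(d))B$ has equal composites with the two maps to $B^{(2)}/(d\phi(d)\cdots\phi^{n-1}(d))B^{(2)}$: both factor through the single map $W_n(A/d)\to W_n(B^{(2)}/dB^{(2)})$ induced by $A/d\to B^{(2)}/dB^{(2)}$, followed by $\alpha_{B^{(2)}}$, by functoriality of $W_n$ and of the perfect-case construction. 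Hence the map factors through the equalizer, producing the desired $\alpha_A$. Independence of the cover, and thus functoriality in $(A,d)$, follows by comparing any two such covers through their prismatic coproduct (again perfect by the same argument). The main technical obstacle I foresee is making the descent step rigorous: verifying that $B^{(2)}$ is genuinely a perfect prism (rather than merely admitting a perfection to one) and that the ideal $(d\phi(d)\cdots\phi^{n-1}(d))$ behaves cleanly under the $(p,d)$-completed base change along $A\to B\to B^{(2)}$ — both amount to careful bookkeeping with the equivalence of Theorem \ref{Cat}.
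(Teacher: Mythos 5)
Your perfect-case argument is clean and essentially correct: identifying $A$ with $W((A/d)^\flat)$ via Theorem~\ref{Cat} and using Lemma~\ref{commm} to read off $\ker(\tilde\theta_n\circ\phi)=(d\phi(d)\cdots\phi^{n-1}(d))$ gives the isomorphism directly, and this agrees with the formula $r'_n=(\theta_n\circ\phi^{n-1})^{-1}$ that the paper also derives. The gap is entirely in the reduction to the perfect case.

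You invoke André's lemma to produce a $(p,d)$-completely faithfully flat cover $(A,d)\to(B,dB)$ with $(B,dB)$ a perfect prism, but André's lemma does not say this. In the form stated in the paper it begins with a \emph{perfectoid ring} $R$ and produces a faithfully flat map $R\to S$ of perfectoid rings; equivalently, it starts from a prism that is already perfect. For an arbitrary oriented prism $(A,d)$ the natural candidate for a perfect cover is the $(p,d)$-completed perfection $A\to A_\infty$, and that map is \emph{not} faithfully flat in general — indeed the paper's own Proposition~\ref{lalalala} later imposes ``$A\to A_\infty$ is faithfully flat'' as a separate nontrivial hypothesis. So the descent diagram you write down never gets off the ground: there is no perfect $B$ over which to run it. (A secondary issue: the explicit formula for the map on a generator $V^j([x])$ only makes sense a priori as a map to $\prod_i A/\phi^i(d)$, and one has to argue it lands in $A/d\phi(d)\cdots\phi^{n-1}(d)$; this is where the paper needs the $p$-torsion-free hypothesis so that $A/d\cdots\phi^{n-1}(d)\hookrightarrow\prod_i A/\phi^i(d)$, and you skip this entirely.)

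The route the paper actually takes is: (i) prove the statement for the \emph{universal} oriented prism, where the map to the completed perfection $A_\infty$ \emph{is} faithfully flat (a special structural result from \cite{Prisms}, not André's lemma); (ii) propagate to free $\delta$-rings over the universal prism by flat base change; (iii) for general $A$, pick a truncated simplicial resolution $\tilde{\tilde A}\rightrightarrows\tilde A\to A$ by such free $\delta$-rings, observe $W_n(A/d)$ is the coequalizer of $W_n(\tilde{\tilde A}/d)\rightrightarrows W_n(\tilde A/d)$, and obtain the map on $W_n(A/d)$ by the universal property of the coequalizer. That last step is the missing idea in your proposal; you would need to replace your descent-along-a-perfect-cover by a descent along a free (not perfect) cover, and then feed the free case through the universal one, exactly as the paper does.
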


\begin{proof}
First, we assume that $A/d$ is $p$-torsion-free. Then $(A, d)$ is a transversal prism (see Remark~\ref{Bras} for explanations). In particular, the natural map
\[
A/d\phi(d)\cdots\phi^{n-1}(d)=A/\tilde{d}_n\hookrightarrow\prod_{i=0}^{n-1}A/\phi^i(d)
\]
is injective. Indeed, since $A/d[p]=0$, the prism $(A, d)$ is bounded. Therefore, the derived $(p, d)$-completion is the same as the classical one. Then it is easy to see that $(p,d)$ is a regular sequence in $A$. Hence, $(p, \phi^t(d))$ is also regular. By induction, one sees that $\phi^{k+s}(d)\equiv p\cdot \text{unit} \mod{\phi^s(d)}$. This implies the required result.

Thus, to define the required map, we can define a map from Witt vectors $W_n(A/d)$ to $\prod_{i=0}^{n-1}A/\phi^i(d)$  and show that its image actually contained in $A/\tilde{d}_n$.

So we define a map $r_n\colon W_n(A/d)\to \prod_{i=0}^{n-1}A/\phi^i(d)$ on generators by

\[
V^j([x])\mapsto (p^j x^{p^{n-j-1}},p^j\phi(x)^{p^{n-j-2}},\ldots,p^j\phi^{n-j-1}(x),\underbrace{0\ldots,0}_{\text{$j$ times}}).
\]

We now treat the universal case, i.e., when $(A, d)$ is the universal oriented prism. In this situation the perfection map $A\to A_{\infty}$ is $(p,d)$-completely faithfully flat (for details see \cite[~Example~3.4 or the proof of Theorem~15.2]{Prisms}). Using $p$-completely faithfully flat descent, it will suffice to prove the proposition for $A_\infty$. Indeed, consider the commutative diagram of Amitsur complexes:
\[
\xymatrix{
 0\ar[r] & A/\tilde{d}_n\ar[r]\ar[d] & A_\infty/\tilde{d}_n\ar[r]\ar[d]  & (A_\infty\widehat{\otimes}_A A_\infty)/\tilde{d}_n\ar[d]\\
  0\ar[r] & \prod_{i=0}^{n-1}A/\phi^i(d)\ar[r] &\prod_{i=0}^{n-1}A_\infty/\phi^i(d)\ar[r] & \prod_{i=0}^{n-1}(A_\infty\widehat{\otimes}_A A_\infty)/\phi^i(d).}
\] 

First, by \cite[Lemma~3.3]{Ar}, the quotients $A/\tilde{d}_n$, $A_\infty/\tilde{d}_n$, $A/\phi^i(d)$, and $A_\infty/\phi^i(d)$ are $p$-torsion-free. Therefore, \cite[Proposition~1.9]{Tian}  implies that both rows of the diagram are exact.

Second, since $A\to A_\infty$ is $(p,d)$-completely faithfully flat and $A$ is transversal, it follows from \cite[Remark~2.4.4]{BHLURIE} that  $A_\infty$ is also transversal. Thus, by \cite[Lemma~3.6]{Ar}, the left and central vertical arrows must be injective. The right vertical map is also injective; this follows from $(p,d)$-completely flatness of $A\to A_\infty$ and the injectivity of the central vertical arrow (see \cite[Lemma~A.3]{Arr} or \cite[Proposition~1.9]{Tian}).

Thus, if some element $t$ lies in $\prod_{i=0}^{n-1}A/\phi^i(d)$ contained in $\prod_{i=0}^{n-1}A_\infty/\phi^i(d)$ and we know that $t$ also comes from $A_\infty/\tilde{d}_n$, then it must lie in $A/\tilde{d}_n$. Hence, we reduce to the case of $A_\infty$. From now on, let us denote this ring simply by $A$.

We recall the following result (see \cite[Example~3.16 and Lemma~3.3]{BMS_I}).

\begin{Lemma}\label{ghost}
Let $R$ be a perfectoid ring and $\theta_r\colon A_{\inf}(R)\to W_r(R)$ be Fontaine's map. Let $w_r \colon W_r(R)\to R^r$ be a ghost map. Then 
\[
w_r\circ\theta_r=(\theta, \theta\phi,\theta\phi^2,\ldots,\theta\phi^{r-1}).
\]
\end{Lemma}

Observe that by Theorem~\ref{Cat}, the construction $(A,I)\mapsto A/I$ defines an
equivalence of categories between perfect prisms and perfectoid rings. Moreover, $A$ can be recovered from $A/I$ as $A_{\inf}(A/I)$, and $I$ can be recovered as the kernel of Fontaine's map. Since in our case $(A,d)$ is a perfect prism, the ring $A/d$ is perfectoid and $A$ can be recovered from $A/d$ as $A_{\inf}(A/d)$. Hence, by \cite[ Lemma~3.12]{BMS_I}, we have the following isomorphisms:
\[
\xymatrix{
A/d\phi(d)\cdots\phi^{n-1}(d)\ar[r]^-{\phi^{-n+1}}&A/d\phi^{-1}(d)\ldots\phi^{-n+1}(d)\ar[r]^-{\theta_n}&  W_n(A/d).
}
\]

We denote the inverse map by $r'_n$. We will check that $r_n$ coincides with $r'_n$ on generators. First, for simplicity,  we show that $r'_n(V(1))=(p,p,\ldots,p,0)\in \prod A/\phi^i(d)$. In order to do this, we observe that since $A/d$ is $p$-torsion-free, it follows that the ghost map $w_n \colon W_n(A/d)\to (A/d)^n$ is injective. Let $t\in A/d\phi^{-1}(d)\ldots\phi^{-n+1}(d)$ be such an element that $\theta_n(t)=V(1)$ and $y\in A/d\phi(d)\cdots\phi^{n-1}(d)$ be such that $y=\phi^{n-1}(t)=r'_n(V(1))$. We know that $w_n(V(1))=(0,p,\ldots,p)$. Moreover, by Lemma~\ref{ghost}, we see that 
\[ w_n(V(1)) = w_n(\theta_n(t)) = (\theta(t), \theta(\phi(t)),\ldots,\theta(\phi^{n-1}(t))). \]
Note that in our case $\theta\colon A\to A/d$  is just a quotient map, so we can conclude that $t\equiv 0 \mod{d}$ and $\phi^i(t)\equiv p\mod{d}$ for $i=1,
\ldots,n-1$ and, in terms of $y$, we have $\phi^{-n+1}(y)\equiv 0\mod{d}$ and $\phi^{-n+k}(y)\equiv p \mod{d}$ for $k=2,\ldots, n$. Applying $\phi^{n-k}$ to both sides of the congruence, we see that $y\equiv p\mod{\phi^i(d)}$, for $i=0,\ldots, n-2$ and $y\equiv 0\mod{\phi^{n-1}(d)}$. We are done with this case.

By a similar computation, one checks that
\[r_n'( V^j([x]))=(p^j x^{p^{n-j-1}},p^j\phi(x)^{p^{n-j-2}},\ldots,p^j\phi^{n-j-1}(x),\underbrace{0\ldots,0}_{\text{$j$ times}})\in\prod_{i=0}^{n-1}A/\phi^i(d),
\]
thus the last term actually lies in $A/d\phi(d)\cdots\phi^{n-1}(d)$.

Indeed, take an element $t$, which maps to $V^j([x])$ under the map $\theta_n$, and take $y=\phi^{n-1}(t)=r'_n(V^j([x]))$. We know that 
\[
w_n(V^j([x]))=(\underbrace{0,0,\ldots,0}_{\text{$j$ times}},p^jx,p^jx^p,\ldots, p^jx^{p^{n-j-1}}).
\]
Using this identity and that
\[
w_n(V^j([x]))=w_n(\theta_n(t))=(\theta(t), \theta(\phi(t)),\ldots,\theta(\phi^{n-1}(t))),
\]
we see that $\phi^i(t)\equiv 0 \mod{d}$ for $i=0, \ldots, j-1$ and $\phi^i(t)\equiv p^jx^{p^{i-j}}\mod{d}$ for $i=j, \ldots, n-1$. Using that $y=\phi^{n-1}(t)$ and applying the corresponding power of $\phi$, we conclude that 
\[
y\equiv p^j\phi^i(x)^{p^{n-i-j-1}} \mod{\phi^i(d)}
\]
for $i=0, \ldots, n-j-1$ and $y\equiv 0 \mod{\phi^i(d)}$ for $i=n-j, \ldots, n-1$.

So the images of generators $V^j([x])$ under the map $r_n$ lies in $A/d\phi(d)\cdots\phi^{n-1}(d)$. Moreover, the compatibility with the ring structure for $r_n$ follows since the map $r_n$ coincides with $r'_n=(\theta_n\circ\phi^{n-1})^{-1}$ on generators, as we have shown above. We conclude that $r_n$ coincides with $r'_n$ and the result for the universal prism follows.

Note that from the faithfully flat base change, the result and the formula follow automatically for free $\delta$-rings over the universal prism. We now assume that $A/d$ has no $p$-torsion. We can take a part of its simplicial resolution, $\Tilde{\Tilde{A}}\rightrightarrows\Tilde{A}\to A$, where $\Tilde{A}$ and $\Tilde{\Tilde{A}}$ are free $\delta$-rings over the universal prism. Then one has the following commutative diagram:
\[
\xymatrix{
W_n(\Tilde{\Tilde{A}}/d) \ar[d] \ar@<3pt>[r] \ar@<-3pt>[r] & W_n(\Tilde{A}/d)\ar[d]\ar[r] & W_n(A/d) \ar@{-->}[d]\\
\Tilde{\Tilde{A}}/d\phi(d)\cdots \phi^{n-1}(d) \ar@<3pt>[r] \ar@<-3pt>[r] & \Tilde{A}/d\phi(d)\cdots\phi^{n-1}(d)\ar[r] & A/d\phi(d)\cdots\phi^{n-1}(d),}
\] 
where the right downward arrow exists by the universal property since $W_n(A/d)$ is a coequalizer of $W_n(\Tilde{\Tilde{A}}/d)\rightrightarrows W_n(\Tilde{A}/d)$. The explicit formula for a $p$-torsion-free case follows from the above and the surjectivity of right horizontal maps.  Therefore, we get the result and the explicit formula for arbitrary prism $(A,d)$ such that $A/d$ is $p$-torsion-free. 

To prove the result for a general prism $(A,d)$, we repeat the argument from the above paragraph: take a part of a simplicial resolution, $\Tilde{\Tilde{A}}\rightrightarrows\Tilde{A}\to A$, where $\Tilde{A}$ and $\Tilde{\Tilde{A}}$ are free $\delta$-rings over the universal prism. Note that, in the general case there is no way to get the explicit formula, since the map $A/d\phi(d)\cdots\phi^{n-1}(d)\to \prod_{i=0}^{n-1}A/\phi^i(d)$ is not injective anymore.  Again, consider the following commutative diagram
\[
\xymatrix{
W_n(\Tilde{\Tilde{A}}/d) \ar[d] \ar@<3pt>[r] \ar@<-3pt>[r] & W_n(\Tilde{A}/d)\ar[d]\ar[r] & W_n(A/d) \ar@{-->}[d]\\
\Tilde{\Tilde{A}}/d\phi(d)\cdots \phi^{n-1}(d) \ar@<3pt>[r] \ar@<-3pt>[r] & \Tilde{A}/d\phi(d)\cdots\phi^{n-1}(d)\ar[r] & A/d\phi(d)\cdots\phi^{n-1}(d)}
\] 
and conclude because $W_n(A/d)$ is a coequalizer of $W_n(\Tilde{\Tilde{A}}/d)\rightrightarrows W_n(\Tilde{A}/d)$.
\end{proof}

\begin{Rem}\label{Bras}
One of the key points of the above proof was the inclusion
\[
A/d\phi(d)\cdots\phi^{n-1}(d) \hookrightarrow \prod_{i=0}^{n-1}A/\phi^i(d)
\]
for a prism of special type. Recall, (see \cite[Definition~3.2]{Ar}) that a prism $(A, d)$ is called transversal if $(p, d)$ is a regular sequence in $A$. Several useful technical results are known for such prisms. In particular, in \cite[Lemma~3.6]{Ar}, the desired inclusion is proved for transversal prisms. By our assumptions (in the part of the proof where $A/d$ is $p$-torsion-free), the prism is transversal, and therefore the results from~\cite{Ar} can be used. Indeed, in our case $(d, p)$ is a regular sequence, and the prism is bounded. Hence, the derived completion coincides with the classical one. Then it is well-known that, for any ring $R$ with a regular sequence $(r, s)$ such that $R$ is $r$-adically complete, the sequence $(s, r)$ is also regular. For additional information, see \cite[Section~3]{Ar}.
\end{Rem}

\begin{Rem}\label{rem2}
From the construction of the map above and \cite[Lemma~3.4]{BMS_I}, we see that the following diagram is commutative:
\[\xymatrix{
W_{r+1}(A/d)\ar[r]^-{\lambda_{r+1}}  & A/d\phi(d)\cdots\phi^r(d) \\
W_r(A/d)\ar[r]^-{\lambda_r}\ar[u]^{V} & A/d\phi(d)\cdots\phi^{r-1}(d)\ar[u]_{\phi^r(d)\times\text{unit
}},}
\]
where $\lambda_r$ is the map defined in Proposition~\ref{map}.
\end{Rem}

\begin{Rem}\label{etale} There is an \'etale localization property for $\Prism_{R/A}/\tilde{d}_n$. Indeed, with the notation of Lemma~\ref{Etale}, one has the following isomorphism:
\[\Prism_{R/A}/\tilde{d}_n\widehat{\otimes}^L_{W_n(R)}W_n(S)\simeq \Prism_{S/A}/\tilde{d}_n.
\]
To see this, by derived Nakayama argument, it is enough to check the isomorphism after a base change $-\widehat{\otimes}^L_{A/\tilde{d}_n}A/d$. Hence, it suffices to show that
\[\overline{\Prism}_{R/A}\widehat{\otimes}^L_{W_n(R)}W_n(S)\simeq \overline{\Prism}_{S/A}.
\]
Next, the left-hand side can be rewritten as $\overline{\Prism}_{R/A}\widehat{\otimes}^L_{R}(R\widehat{\otimes}^L_{W_n(R)}W_n(S))$. Since $R\to S$ is $p$-completely \'etale, the term in the last brackets is exactly $S$ (see~\cite[Theorem~10.4]{BMS_I} or \cite{LZ}). Hence, we conclude the result by Lemma~\ref{Etale}.
\end{Rem}

\subsection{The de~Rham--Witt complex} In this subsection, we introduce basic definitions and necessary results for the relative de~Rham--Witt complex, following~\cite{LZ} and~\cite{BMS_I}. Let $A$ be a $\mathbb{Z}_{(p)}$-algebra.

\begin{Def} Let $B$ be an $A$-algebra. An $F$-$V$-procomplex for $B/A$ consists of the following data $(\mathcal{W}_r^\bullet, R, F, V, \lambda_r)$:
\begin{enumerate}[(i)]
    \item a commutative differential graded $W_r(A)$-algebra $\mathcal{W}_r^\bullet=\bigoplus_{n\geq 0} \mathcal{W}_r^n$ for each integer $r\geq 1$;
    \item morphisms $R\colon \mathcal{W}_{r+1}^\bullet\to R_*\mathcal{W}_r^\bullet$ of differential graded $W_{r+1}(A)$-algebras for $r\geq 1$;
    \item morphisms $F\colon \mathcal{W}_{r+1}^\bullet\to F_*\mathcal{W}_r^\bullet$ of graded $W_{r+1}(A)$-algebras for $r\geq1$;
    \item morphisms $V\colon F_*\mathcal{W}_r^\bullet\to \mathcal{W}_{r+1}^\bullet$ of graded $W_{r+1}(A)$-modules for $r\geq1$;
    \item morphisms $\lambda_r\colon W_r(B)\to \mathcal{W}_r^0$ for each $r\geq 1$, commuting with $R,V,F$;   
\end{enumerate}
such that the following identities hold:
\begin{itemize}
    \item[-] $R$ commutes with both $F$ and $V$;
    \item[-] $FV$ is a multiplication by $p$;
    \item[-] $FdV=d$;
    \item[-] $V(F(x)y)=xV(y)$;
    \item[-] (Teichm\"uller identity). $Fd\lambda_{r+1}([b])=\lambda_r([b])^{p-1}d\lambda_r([b])$ for $b\in B$ and $r\geq 1$.
\end{itemize}

These complexes are also known as Witt complexes.
\end{Def}

\begin{Th}[see \cite{LZ}]
There is an initial object $\{W_r\Omega^\bullet_{B/A}\}_r$ in the category of $F$-$V$-procomplexes for $B/A$, called the relative de~Rham--Witt complex. This means that, if $(\mathcal{W}_r, R,F,V,\lambda_r)$ is any $F$-$V$-procomplex for $B/A$, then there are unique maps of graded $W_r(A)$-algebras
\[\lambda_r^\bullet\colon W_r\Omega^\bullet_{B/A}\to\mathcal{W}_r^\bullet,
\]
which are compatible with $R,F,V$ in the obvious sense and such that
\[\lambda_r^0\colon W_r(B)\to\mathcal{W}_r^0
\]
is the structure map $\lambda_r$ of the Witt complex $\mathcal{W}_r$ for any $r\geq 1$. 
\end{Th}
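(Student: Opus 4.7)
The plan is to construct the initial object explicitly following Langer--Zink, and then verify universality by the universal property of Kähler differentials. The abstract existence (via the adjoint functor theorem applied to the forgetful functor from $F$-$V$-procomplexes to $A$-algebras, using that limits are computed degree-wise) is easy, but does not give the concrete description needed for later computations, so a hands-on construction is preferable.

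First I would build $W_r\Omega^\bullet_{B/A}$ inductively on $r$. For $r=1$, set $W_1\Omega^\bullet_{B/A}=\Omega^\bullet_{B/A}$; the Frobenius and Verschiebung structure degenerates, and $\lambda_1$ is the identity. For the inductive step, start with the absolute de Rham complex $\Omega^\bullet_{W_{r+1}(B)/W_{r+1}(A)}$. The obvious map $W_{r+1}(B)\to W_r(B)\to W_r\Omega^0_{B/A}$ extends uniquely to a map of DG-algebras $\Omega^\bullet_{W_{r+1}(B)/W_{r+1}(A)}\to R_* W_r\Omega^\bullet_{B/A}$; call its kernel $K_{r+1}^\bullet$. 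One then defines $W_{r+1}\Omega^\bullet_{B/A}$ as the quotient of $\Omega^\bullet_{W_{r+1}(B)/W_{r+1}(A)}$ by the smallest graded differential ideal contained in $K_{r+1}^\bullet$ on which the prospective operators $F$ and $V$ (determined on degree zero by the usual Witt vector maps, and on higher degrees by the forced identities $FdV=d$, $FV=p$, $V(F(x)y)=xV(y)$, and $Fd\lambda([b])=\lambda([b])^{p-1}d\lambda([b])$) are well-defined and satisfy the axioms.

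Once the object is built, the universal property is essentially formal. Given any $F$-$V$-procomplex $(\mathcal{W}_r^\bullet,R,F,V,\lambda_r)$, the structural map $\lambda_r:W_r(B)\to\mathcal{W}_r^0$ is a map of $W_r(A)$-algebras, so it extends uniquely to a map of DG-$W_r(A)$-algebras $\Omega^\bullet_{W_r(B)/W_r(A)}\to\mathcal{W}_r^\bullet$ by the universal property of Kähler differentials. The relations imposed in the construction of $W_r\Omega^\bullet_{B/A}$ are, by design, exactly those forced by the $F$-$V$-procomplex axioms, so the map descends uniquely to $W_r\Omega^\bullet_{B/A}\to\mathcal{W}_r^\bullet$, and compatibility with $R,F,V$ is automatic from the way these were defined on the quotient.

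The main obstacle is the inductive construction itself: one must show that the ideal one quotients out by is stable under all of $d$, $F$, $V$, and $R$ simultaneously and that the Teichmüller identity can be imposed without collapsing the whole complex. Concretely, imposing any one relation may force new relations (e.g.\ applying $d$ to the Teichmüller identity, or commuting it with $F^k V^\ell$), and one needs to check that this closure process terminates in something both nonzero and still carrying the required operators. This is the technical heart of the Langer--Zink construction; once it is carried out, functoriality in $B$ and compatibility with the maps $\lambda_r^\bullet$ follow from the universal properties used at each step.
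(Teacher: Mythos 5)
Your overall skeleton — inductive construction of $W_r\Omega^\bullet_{B/A}$ as a quotient of a de~Rham--type complex on $W_r(B)$, followed by a descent of maps via the universal property of Kähler differentials — is the right shape, and it is the approach of Langer--Zink and the paper's sketch. But you start from the \emph{ordinary} de~Rham complex $\Omega^\bullet_{W_r(B)/W_r(A)}$, and this is where the paper's sketch (and Langer--Zink's proof) differs in an essential way: the correct starting object is the \emph{pd-de Rham complex} $\tilde{\Omega}^\bullet_{W_r(B)/W_r(A)}$ adapted to the canonical divided power structure on the Verschiebung ideal $VW_{r-1}(B)$. The two key facts the paper isolates are precisely what make this work and what you are missing.

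First, one must show that in \emph{any} Witt complex $\mathcal{W}^\bullet_n$ the differential $d\colon W_n(B)\to\mathcal{W}^1_n$ is automatically a pd-derivation (point (i) in the paper's sketch, via the identity $dV[x]^p = V[x]^{p-1}dV[x]$ which follows from $FV=p$, $FdV=d$ and the Teichm\"uller identity). This is what guarantees that the structural map $\lambda_r$ extends to $\tilde{\Omega}^\bullet_{W_r(B)/W_r(A)}\to\mathcal{W}^\bullet_r$ rather than only to $\Omega^\bullet_{W_r(B)/W_r(A)}$; you invoke the universal property of ordinary Kähler differentials, which is not the right universal property here. Second — and this is the genuine obstacle your plan glosses over — the Frobenius $F$ is \emph{not} a map of DGAs (one has $dF=pFd$, not $dF=Fd$), so one cannot define $F$ on $\Omega^\bullet$ by functoriality of de Rham complexes from the ring map $F\colon W_{r+1}(B)\to W_r(B)$. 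Langer--Zink's point (ii) is that the formula $FD([a]+V[b]) = [a]^{p-1}D[a] + D[b]$ sends pd-derivations to pd-derivations, and hence defines $F$ on $\tilde{\Omega}^1$ (and on $\tilde{\Omega}^\bullet$) already satisfying $FdV=d$, $Fd[x]=[x]^{p-1}d[x]$, $dF=pFd$. Your ``smallest graded differential ideal on which the prospective operators are well-defined'' is an abstract closure that you yourself flag as the technical heart; the pd-framework makes this concrete from the start and removes the worry that the closure collapses or fails to terminate, because $\tilde{\Omega}^\bullet$ already carries a Frobenius and the remaining quotient is explicit. Without divided powers the inductive step in your proposal is not actually carried out.
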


\begin{sketch}[adapted from \cite{Ill}] We need to check the following two key properties:
\begin{enumerate}[(i)]
    \item For any $F$-$V$-procomplex $\mathcal{W}^\bullet$, and all $n\geq 1$, the map $d\colon W_n(B)\to \mathcal{W}^1_n$ is a pd-derivation. Hence, $\mathcal{W}_n$ is a pd-differential graded algebra. To check that the map $d$ is a pd-derivation, we need to show that, for any $x\in B$,
    \[
    d\gamma_p(V[x])=\gamma_{p-1}(V[x])dV[x].
    \]
   By the definition of the pd-structure on $W(B)$, this equality holds if and only if $p^{p-2}dV[x]^p=p^{p-2}V[x]^{p-1}dV[x]$, but already
    \[dV[x]^p=d([x]V(1))=V(1)d[x]=VFd[x]=V([x]^{p-1}d[x])=V[x]^{p-1}dV[x].
    \]
    \item If $D\colon W_n(A)\to M$ is a pd-derivation into $W_n(A)$-module $M$, then the map $FD\colon W_{n-1}A\to F_*M$ defined by
    \[
    FDx=[a^{p-1}]D[a]+DV[b]
    \]
    for $x=[a]+V[b]$ is also a pd-derivation.
\end{enumerate}
In particular, it follows from property~(ii) that the projective system of pd-differen\-tial de~Rham complexes $\breve{\Omega}^\bullet_{W_n(B)/W_n(A)}$ naturally acquires maps of graded algebras
\[
F\colon \breve{\Omega}^\bullet_{W_n(B)/W_n(A)}\to \breve{\Omega}^\bullet_{W_{n-1}(B)/W_{n-1}(A)},
\]
which satisfy the following identities from the definition of $F$-$V$-procomplex: 
\begin{center}
($FdVx=dx$ for $x\in W_n(B)$, $Fd[x]=[x^{p-1}]d[x]$ for $x\in B$, $dFx=pFdx$).
\end{center}
The relative de~Rham--Witt complex $W_n \Omega^\bullet_{B/A}$ is then constructed inductively as a quotient of $\breve{\Omega}^\bullet_{W_n(B)/W_n(A)}$.
\end{sketch}

\begin{Def}
    Let $A\to B$ be a morphism of $\mathbb{Z}_{(p)}$-algebras.
    The continuous de~Rham--Witt complex of $B$ over $A$ of length $r$ is defined as
    \[
    W_r\Omega^{i, cont}_{B/A}=\varprojlim_{s}W_r\Omega^i_{(B/p^s)/(A/p^s)}=\varprojlim_{s}W_r\Omega^i_{B/A}/p^s.
    \]
\end{Def}

\begin{Th}[Higher Cartier isomorphism]\label{HighCart}
Let $X/k$ be a smooth scheme over a perfect field $k$ of characteristic $p>0$. Then, for any integer $n\geq 1$, the map $F^n\colon W_{2n}\Omega^i_{X/k}\to W_n\Omega^i_{X/k}$ induces an isomorphism
\[W_n\Omega^i_{X/k}\to\mathcal{H}^iW_n\Omega^\bullet_{X/k},
\]
which is compatible with products and equals to the inverse of the Cartier operator~$C^{-1}$ for $n=1$.
\end{Th}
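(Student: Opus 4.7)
The plan is to prove this by induction on $n$, with base case $n=1$ the classical Cartier isomorphism $C^{-1}:\Omega^i_X\xrightarrow{\sim}\mathcal{H}^i(\Omega^\bullet_X)$. Before starting the induction I would verify that $F^n$ genuinely induces a map $W_n\Omega^i_X\to\mathcal{H}^i(W_n\Omega^\bullet_X)$. The Teichm\"uller identity in the definition of a Witt complex forces the commutation relation $dF=pFd$ on Teichm\"uller representatives, whence globally (both sides being graded derivations agreeing on generators); iterating gives $dF^n=p^nF^nd$. Since $X$ lives in characteristic $p$, the element $p^n$ annihilates $W_n(\mathcal{O}_X)$, so $F^n\omega$ is closed in $W_n\Omega^i_X$ for every $\omega\in W_{2n}\Omega^i_X$. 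The factorisation through the restriction $R^n:W_{2n}\Omega^i_X\twoheadrightarrow W_n\Omega^i_X$ uses the standard identification $\ker R^n=V^nW_n\Omega^i_X+dV^nW_n\Omega^{i-1}_X$, together with $F^nV^n=p^n=0$ and $F^ndV^n=d$ (the latter obtained by iterating the defining relation $FdV=d$); hence the kernel of $R^n$ is carried into exact forms.

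For the inductive step, I would exploit the short exact sequence of complexes
\[
0\to \operatorname{gr}^n W\Omega^\bullet_X\to W_{n+1}\Omega^\bullet_X\xrightarrow{R} W_n\Omega^\bullet_X\to 0
\]
together with the explicit description of $\operatorname{gr}^n$ via $V^n$ and $dV^n$ as a complex built from $\Omega^\bullet_X$, its cycles $Z\Omega^\bullet_X$ and boundaries $B\Omega^\bullet_X$. Comparing the associated long exact sequences in hypercohomology at levels $n+1$ and $n$ via the naturality squares coming from $F^n$ and $F^{n+1}$, the five lemma combined with the inductive hypothesis on $W_n$ reduces the step to an analysis of $\mathcal{H}^\bullet(\operatorname{gr}^n W\Omega^\bullet_X)$, which in turn is controlled by the classical Cartier isomorphism on $\Omega^\bullet_X$ applied to the graded pieces.

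The main obstacle is precisely this cohomological analysis of $\operatorname{gr}^nW\Omega^\bullet_X$ and the verification of the compatibility squares relating $F^n$, $F^{n+1}$, and $C^{-1}$ on the graded piece — this is the technical heart of Illusie's original argument in \cite{Ill}, and it is where the perfectness of $k$ really enters, through the usual Cartier isomorphism. Multiplicativity of the resulting isomorphism is then automatic from the fact that $F^n$ is a graded ring homomorphism, and agreement with $C^{-1}$ at $n=1$ is built into the base case.
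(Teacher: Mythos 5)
Your verification that $F^n$ is well defined as a map $W_n\Omega^i_X\to\mathcal{H}^i(W_n\Omega^\bullet_X)$ is correct and complete: $dF=pFd$ gives $d\circ F^n=p^nF^n d=0$ on targets, and the identities $F^nV^n=p^n$ and $F^ndV^n=d$ together with $\ker R^n=V^nW_n\Omega^i_X+dV^nW_n\Omega^{i-1}_X$ show the factorisation through $R^n$ into exact forms. This part is fine and the paper does not spell it out.

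Where you diverge is in the bijectivity argument. The paper's (very compressed) proof identifies the key point as showing $F^nW_{2n}\Omega^i_X=ZW_n\Omega^i_X$, and says this is established by the explicit description of $W_n\Omega^\bullet_X$ for $X=\operatorname{Spec} k[t_1,\ldots,t_r]$ in terms of Deligne's complex of integral forms together with the classical Cartier isomorphism, with the general case following by \'etale localisation. You instead propose an induction on $n$ via the short exact sequence $0\to \operatorname{gr}^nW\Omega^\bullet_X\to W_{n+1}\Omega^\bullet_X\to W_n\Omega^\bullet_X\to 0$ and a five-lemma comparison of long exact sequences, reducing to $\mathcal{H}^\bullet(\operatorname{gr}^nW\Omega^\bullet_X)$. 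This is a genuinely different route, and you are candid that the graded-piece analysis is the technical heart and is being deferred to Illusie. Be aware, however, that the paper explicitly remarks that the proof in \cite{Il} was \emph{insufficient} and was corrected in \cite{IlR}; the filtration/graded-piece induction you sketch is essentially where Illusie's original argument lived, and the correction is exactly the polynomial-algebra computation via integral forms that your plan omits. A more robust version of your argument would therefore first establish the case $X=\operatorname{Spec}k[t_1,\dots,t_r]$ by the integral-forms calculus (where one can see $F^nW_{2n}\Omega^i=ZW_n\Omega^i$ directly on the weight decomposition), and only then invoke \'etale localisation, rather than attempting a purely formal five-lemma descent through the filtration. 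Your remarks on multiplicativity (from $F^n$ being a graded algebra map) and on the $n=1$ base case agreeing with $C^{-1}$ are correct.
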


\begin{proof} The key point is to establish   $F^nW_{2n}\Omega^i_{X/k}=ZW_n\Omega^i_{X/k}$. The original proof given in \cite{Il} was incomplete and was subsequently corrected in \cite{IlR}. The argument relies on the explicit description of $W_n\Omega^\bullet_{X/k}$ for $X=\Spec(k[t_1,\ldots, t_n])$ in terms of the complex of integral forms and the properties of the Cartier isomorphism.
\end{proof}

\begin{Lemma}[\'Etale base change]\label{Deter}
Let $A\to B$ be a map of $\mathbb{Z}_{(p)}$-algebras, and let $S$ be an \'etale $B$-algebra. Then the natural map
\[W_r\Omega^n_{B/A}\otimes_{W_r(B)}W_r(S)\to W_r\Omega^n_{S/A}
\]
is an isomorphism.
\end{Lemma}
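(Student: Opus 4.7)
The plan is to identify the source as a Witt complex over $S/A$ and then invoke the universal property of the relative de Rham-Witt complex.

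The essential input is the compatibility of the Witt vector functor with \'etale morphisms (Langer--Zink, cf. \cite{LZ}, building on work of Borger and van der Kallen): if $B\to S$ is \'etale, then $W_r(B)\to W_r(S)$ is \'etale, and both the restriction and the Frobenius squares
\[
\xymatrix{W_{r+1}(B)\ar[r]^-{F}\ar[d] & W_r(B)\ar[d]\\ W_{r+1}(S)\ar[r]^-{F} & W_r(S)}
\]
are cocartesian. Granting this, I would endow
\[
\mathcal{W}_r^\bullet:=W_r\Omega^\bullet_{B/A}\otimes_{W_r(B)}W_r(S)
\]
with the structure of an $F$-$V$-procomplex over $S/A$ as follows. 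The graded-algebra structure is inherited from the tensor product; the differential extends uniquely from $W_r\Omega^\bullet_{B/A}$ across the \'etale map $W_r(B)\to W_r(S)$ by the unique extension of derivations along \'etale morphisms; the restriction $R$ is tautological; and the maps $F$ and $V$ are induced from those on $W_\bullet\Omega^\bullet_{B/A}$, being well defined precisely because of the cocartesianness of the squares above. The identities $FV=p$, $FdV=d$, $V(Fx\cdot y)=xV(y)$ and the Teichm\"uller relation descend from their validity in $W_\bullet\Omega^\bullet_{B/A}$, together with $W_r(S)$-bilinearity.

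Once $\mathcal{W}_r^\bullet$ is recognised as a Witt complex over $S/A$, the universal property of $W_r\Omega^\bullet_{S/A}$ produces a map of Witt complexes $\alpha:W_r\Omega^\bullet_{S/A}\to \mathcal{W}_r^\bullet$, while functoriality of $W_r\Omega^\bullet$ in the ring argument gives the natural map $\beta:\mathcal{W}_r^\bullet\to W_r\Omega^\bullet_{S/A}$, $\omega\otimes x\mapsto \iota(\omega)\cdot x$, where $\iota$ is functoriality; this $\beta$ is precisely the map of the lemma. The composition $\beta\circ\alpha$ is a Witt-complex endomorphism of $W_r\Omega^\bullet_{S/A}$ that is the identity in degree zero, hence equals the identity by the universal property. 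The other composition $\alpha\circ\beta$ coincides with the identity on pure tensors $\omega\otimes 1$ by applying the universal property of $W_r\Omega^\bullet_{B/A}$ over $B$ (both $\alpha\circ\iota$ and $\omega\mapsto\omega\otimes 1$ are Witt-complex maps over $B$ agreeing in degree zero), and this extends to all of $\mathcal{W}_r^\bullet$ by $W_r(S)$-linearity of $\alpha\circ\beta$.

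The main obstacle is the cocartesianness of the Frobenius square of Witt vectors under an \'etale morphism of base rings; once this is granted, every verification of the Witt-complex axioms on $\mathcal{W}_r^\bullet$ and of the two-sided inverse property is formal.
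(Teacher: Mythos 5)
Your proposal is correct, and it takes essentially the same route as the paper: the paper's proof simply cites Langer--Zink (\cite{LZ}, Proposition 1.7) together with the observation that the $F$-finiteness/$p$-nilpotence hypothesis there was used only to guarantee that $W_r(B)\to W_r(S)$ is \'etale, which holds unconditionally by \cite{BMS I}, Theorem 10.4. What you have written out is precisely the content of that Langer--Zink argument (exhibit the tensor product as a Witt complex over $S/A$ via \'etaleness and the cocartesian $F$-square, then apply the universal property), so your proof and the paper's are the same argument at different levels of explicitness.
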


\begin{proof}
If $p$ is nilpotent in $B$ or $B$ is $F$-finite, the result follows from \cite[Proposition~1.7]{LZ}. This assumption is used in \cite{LZ} only to ensure that the map $W_r(B) \to W_r(S)$ is \'etale. However, by \cite[Theorem~10.4]{BMS_I}, this map is always \'etale. Therefore, the argument in \cite{LZ} applies in general.
\end{proof}

Let $S$ be a ring such that $p$ is nilpotent in $S$, and let $X$ be a smooth scheme over~$S$. Denote by $X_{zar}$ the topos of Zariski sheaves on $X$. For each positive integer~$n$, let $(X/W_n(S))_{crys}$ denote the crystalline topos, and let $u_n\colon (X/W_n(S))_{crys}\to X_{zar}$ be the natural map of topoi.

An important advantage of the de~Rham--Witt complex is the existence of a comparison isomorphism with crystalline cohomology.

\begin{Th}\label{cris}
Let $S$ be a ring such that $p$ is nilpotent in $S$, and let $X$ be a smooth scheme over $S$. There exists a canonical map of projective systems in the derived category $D^+(X, W_n(S))$ of sheaves of $W_n(S)$-modules on $X_{zar}$:
\[Ru_{n*}\mathcal{O}_{X/W_n(S)}\to W_n\Omega^\bullet_{X/S},
\]
which is, in fact, an isomorphism. 
\end{Th}
\begin{proof}
For the special case where $S=\Spec(k)$ with $k$ a perfect field of characteristic~$p$, the proof was given in \cite{Il}. The general case was obtained in \cite{LZ}.
\end{proof}

\subsection{The de~Rham--Witt complex of a polynomial algebra}
We now recall several results of Langer--Zink concerning the relative de~Rham--Witt complex of $A[\underline{T}]=A[T_1,\ldots, T_d]$ and $A[\underline{T}^{\pm 1}]=A[T_1^{\pm 1},\ldots, T_d^{\pm 1}]$.

Let $A$ be a $\mathbb{Z}_{(p)}$-algebra. In this subsection, we provide an explicit description of the relative de~Rham--Witt complex of the Laurent polynomial algebra $A[\underline{T}^{\pm 1}]$. Let $a\colon \{1,\ldots,d\}\to p^{-r}\mathbb{Z}$ be a weight function. We define $\nu(a)\coloneq \min_{i}{\nu(a(i))}$, where $\nu(a(i))=\nu_p(a(i))\in\mathbb{Z}\cup\{\infty\}$ is the $p$-adic valuation of $a(i)$. For any subset $I\subset\{1, \ldots, d\}$, we also set $\nu(a|_I)\coloneq \min_{i\in I}\nu(a(i))$. 

We denote by $P_a$ the collection of disjoint partitions $I_0,\ldots, I_n$ of $\{1,\ldots,d\}$ satisfying:

\begin{enumerate}[(i)]
    \item each subset $I_1,\ldots, I_n$ must be non-empty, while $I_0$ may be empty;
    \item the $p$-adic valuation of any element in $a(I_{j-1})$ is at most the $p$-adic valuation of any element in $a(I_j)$, for $j=1,\dots,n$;
    \item to resolve ambiguities when $\nu\colon\{1,\ldots,d\}\to\mathbb{Z}$ is non-injective, we fix a total ordering $\preceq_a$ on $\{1,\ldots,d\}$ such that $\nu$ is weakly increasing. We then require that every element of $I_{j-1}$ is strictly $\preceq_a$-less than all elements of $I_j$.
\end{enumerate}

For a partition $(I_0,\ldots, I_d)\in P_a$, define $\rho_1$ as the greatest integer between $0$ and $n$ such that $\nu(a|_{I_{\rho_1}})<0$ (set $\rho_1=0$ if no such integer exists). Similarly, let $\rho_2$ be the greatest integer between $0$ and $n$ such that $\nu(a|_{I_{\rho_2}})<\infty$. 

For convenience, we define $u(a):=\max\{0, -\nu(a)\}$. Given $x\in W_{r-u(a)}(A)$, we construct an element $e(x, a, I_0,\ldots, I_n)\in W_r\Omega^n_{A[\underline{T}^{\pm1}]/A}$ as follows:

\begin{enumerate}[(i)]
    \item $(I_0\neq\emptyset)$ the product of elements
    \[V^{-\nu(a|_{I_0})}(x\prod_{i \in I_0}[T_i]^{a(i)/p^{\nu(a|_{I_0})}})
    \]
   \[dV^{-\nu(a|_{I_j})}\prod_{i \in I_j}[T_i]^{a(i)/p^{\nu(a|_{I_j})}}\text{, where $j=1,\ldots,\rho_1$},
    \]
      \[F^{\nu(a|_{I_j})}d\prod_{i \in I_j}[T_i]^{a(i)/p^{\nu(a|_{I_j})}}\text{, where $j=\rho_1+1,\ldots,\rho_2$},
    \]
    \[d\log\prod_{i\in I_j}[T_i] \text{, where $j=\rho_2+1,\ldots, n$}.
    \]
    \item $(I_0=\emptyset, \nu(a)<0)$ the product of elements
    \[dV^{-\nu(a|_{I_1})}(x\prod_{i \in I_1}[T_i]^{a(i)/p^{\nu(a|_{I_1})}})
    \]
   \[dV^{-\nu(a|_{I_j})}\prod_{i \in I_j}[T_i]^{a(i)/p^{\nu(a|_{I_j})}}\text{, where $j=2,\ldots,\rho_1$},
    \]
    \[F^{\nu(a|_{I_j})}d\prod_{i \in I_j}[T_i]^{a(i)/p^{\nu(a|_{I_j})}}\text{, where $j=\rho_1+1,\ldots,\rho_2$},
    \]
    \[d\log\prod_{i\in I_j}[T_i] \text{, where $j=\rho_2+1,\ldots, n$}.
    \]
    \item $(I_0=\emptyset, \nu(a)\geq0)$ the product of $x\in W_r(A)$ with the elements
   
    \[F^{\nu(a|_{I_j})}d\prod_{i \in I_j}[T_i]^{a(i)/p^{\nu(a|_{I_j})}}\text{, where $j=1,\ldots,\rho_2$},
    \]
    \[d\log\prod_{i\in I_j}[T_i] \text{, where $j=\rho_2+1,\ldots, n$}.
    \]
\end{enumerate}

\begin{Th}[{\cite[Proposition~2.17]{LZ}}]\label{poly} The map of $W_r(A)$-modules
\[e\colon\bigoplus_{a:\{1,\ldots, d\}\to p^{-r}\mathbb{Z}}\bigoplus_{(I_0,\ldots, I_n)\in P_a}V^{u(a)}W_{r-u(a)}(A)\to W_r\Omega^n_{A[\underline{T}^{\pm 1}]/A}
\]
given by $V^{u(a)}(x)\mapsto e(x,a, I_0,\ldots, I_n)$ is an isomorphism.
\end{Th}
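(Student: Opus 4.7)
The plan is to construct an inverse to $e$ via the universal property of the relative de Rham-Witt complex, and then verify surjectivity by induction on the truncation level, bootstrapping from the classical Deligne-Cartier description in degree one.

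First I would let $M_r^n$ denote the right-hand side of the displayed formula and endow the family $\{M_r^\bullet\}_{r\geq 1}$ with an $F$-$V$-procomplex structure over $A[\underline{T}^{\pm 1}]/A$. Every generator $e(x, a, I_0,\ldots, I_n)$ is by definition a product of expressions built from Teichm\"uller lifts of monomials by means of $V, dV, F, d$, and $d\log$. Consequently, each of the operations $R, F, V, d$ together with the multiplication by $\lambda_r([T_i])$ acts on the indexing data $(a, I_0,\ldots, I_n)$ in an explicit combinatorial way: $V$ shifts the weight by $p$ and enlarges $I_0$, $F$ does the opposite, and $d$ either introduces a new block carrying a $d\log$-factor or shifts an index of $I_0$ into a new first $dV$-block. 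The Teichm\"uller identity $Fd\lambda_{r+1}([b]) = \lambda_r([b])^{p-1}d\lambda_r([b])$ is built into the very definition of the basic elements. By the universal property of $W_r\Omega^\bullet_{A[\underline{T}^{\pm 1}]/A}$, we then obtain a unique homomorphism $\rho : W_r\Omega^\bullet_{A[\underline{T}^{\pm 1}]/A}\to M_r^\bullet$ compatible with all of this structure, and inspection of the formulas shows $\rho\circ e = \mathrm{id}_{M_r^\bullet}$, so $e$ is split injective.

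For surjectivity I would induct on $r$. The base case $r=1$ is the classical description of $\Omega^\bullet_{A[\underline{T}^{\pm 1}]/A}$ via the monomials $T^a$ and logarithmic differentials $d\log[T_i]$: only basic elements with $u(a)=0$ survive, and these generate by the usual Cartier-type argument. For the inductive step, I would exploit the canonical filtration
\[
0 \to \mathrm{Fil}^{r}W_{r+1}\Omega^\bullet_{A[\underline{T}^{\pm 1}]/A} \to W_{r+1}\Omega^\bullet_{A[\underline{T}^{\pm 1}]/A} \to W_r\Omega^\bullet_{A[\underline{T}^{\pm 1}]/A} \to 0
\]
recalled before Theorem \ref{cris}, whose left term equals $V^r W_1\Omega^\bullet + dV^r W_1\Omega^{\bullet-1}$. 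The surjectivity in the base case $r=1$ identifies the left term with the image of those basic elements of level $r+1$ with $u(a)=r$: the $V^r$-piece corresponds precisely to partitions with $I_0\neq\emptyset$ and $\nu(a|_{I_0})=-r$, and the $dV^r$-piece to partitions with $I_0=\emptyset$ and $\nu(a|_{I_1})=-r$. Combined with the inductive hypothesis applied to $W_r\Omega^\bullet_{A[\underline{T}^{\pm 1}]/A}$, the five-lemma gives surjectivity of $e$ at level $r+1$. Passing from the universal case $A=\mathbb{Z}_{(p)}$ to a general $\mathbb{Z}_{(p)}$-algebra is then a formal base change: both $W_r\Omega^\bullet_{-/\mathbb{Z}_{(p)}}$ and $M_r^\bullet$ commute with filtered colimits of $\mathbb{Z}_{(p)}$-algebras and with flat base change, since the basic elements transform naturally in $A$.

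The main obstacle will be the combinatorial bookkeeping in the first paragraph: namely, verifying that when $d$ is applied to a product of basic elements by the graded Leibniz rule, the resulting expression re-expresses as a $W_r(A)$-linear combination of basic elements $e(y, a', I_0',\ldots,I_{n+1}')$, and similarly for $F$ and $V$ acting on products. Concretely one has to check that the identities $FV=p$, $FdV=d$, $V(F(x)y)=xV(y)$, and the Teichm\"uller identity all hold after such rewriting; this is where the three distinct shapes in the definition of the basic elements (cases $I_0\neq\emptyset$, $I_0=\emptyset$ with $\nu(a)<0$, and $I_0=\emptyset$ with $\nu(a)\geq 0$) are forced on us. Once this is in place, the remainder of the argument is clean.
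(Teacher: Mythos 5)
The paper itself does not prove this theorem; it simply cites \cite{BMS I}, Theorem~10.12, whose proof in turn follows Langer--Zink. Your overall strategy is the same one used in those references: equip the explicit direct sum $M_r^\bullet$ with an $F$-$V$-procomplex structure and compare with $W_r\Omega^\bullet_{A[\underline{T}^{\pm 1}]/A}$ via initiality. But your sketch has a genuine gap rather than mere bookkeeping deferred to the reader. The claim that ``inspection of the formulas shows $\rho\circ e = \mathrm{id}$'' presupposes that $M_r^\bullet$ already carries a well-defined commutative dga structure together with $F$ and $V$: the map $\rho$ only exists once that structure is specified, and since $\rho$ is a map of graded algebras, computing $\rho(e(x,a,I_0,\ldots,I_n))$ amounts to multiplying, \emph{inside} $M_r^\bullet$, the images of the individual factors $V^{\ast}(\cdots)$, $dV^{\ast}(\cdots)$, $F^{\ast}d(\cdots)$, $d\log(\cdots)$. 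That multiplication has not been defined, and defining it --- i.e.\ writing down how a product of two basic Witt differentials re-expands in the proposed basis, with all the case splits by $p$-adic valuation --- is essentially the whole content of the theorem. In Langer--Zink and Illusie this occupies the bulk of the argument; calling it ``combinatorial bookkeeping'' understates it badly. Until that structure is produced and the $F$-$V$-procomplex axioms (in particular $V(F(x)y)=xV(y)$ and the Teichm\"uller identity) are verified directly on $M_r^\bullet$, the first half of your argument is circular.

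One smaller point. If you go further and also check that $e$ itself is a morphism of $F$-$V$-procomplexes, then initiality of $W_r\Omega^\bullet$ already forces $e\circ\rho=\mathrm{id}$, so $e$ is surjective for free and your five-lemma induction becomes superfluous. If instead you only prove $\rho\circ e=\mathrm{id}$, then the induction is genuinely needed, but you should make explicit both the base case $r=1$ (the surviving basic elements with $u(a)=0$ must be matched against the monomial-weight decomposition of $\Omega^\bullet_{A[\underline{T}^{\pm 1}]/A}$) and the identification of $\mathrm{Fil}^r W_{r+1}\Omega^\bullet$ with the summands having $u(a)=r$, which requires checking that the transition map $M_{r+1}^\bullet\to M_r^\bullet$ corresponds to restriction of Witt vectors. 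None of this is hard, but none of it is tautological either, and as written the proof does not yet close.
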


\begin{proof}
See \cite[Theorem~10.12]{BMS_I}.
\end{proof}

\begin{Rem}
To describe the de~Rham--Witt complex for a polynomial algebra $A[\underline{T}]$ (rather than $A[\underline{T}^{\pm 1}]$), replace $p^{-r}\mathbb{Z}$ with $p^{-r}\mathbb{Z}_{\geq 0}$.
\end{Rem}

\subsection{An application of the higher Cartier isomorphism to special perfect prisms}
Our main goal is to prove Theorem~\ref{perfect_case}.
We will present its proof in the next section, but first we consider a special case where $d=p$ and $A/p$ is a perfect field.

\begin{Th}
Let $(A,p)$ be a crystalline prism such that $A/p$ is a perfect field, and let $S$ be a smooth $A/p$-algebra. Then there exists an isomorphism
\[
W_r\Omega^i_{S/(A/p)}\simeq H^i((S/A)_\Prism,\mathcal{O}_\Prism/p^r).
\]
\end{Th}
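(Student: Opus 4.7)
The plan is to chain together three results already recalled in the excerpt: the crystalline comparison for prismatic cohomology, the Illusie--Langer--Zink comparison between crystalline cohomology and de Rham--Witt (Theorem \ref{cris}), and the higher Cartier isomorphism (Theorem \ref{HighCart}).

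The starting point is the observation that a crystalline prism $(A,p)$ with $A/p=k$ a perfect field is necessarily $(W(k),p)$: a $p$-torsion free, $p$-adically complete ring with perfect residue field $k$ is uniquely $W(k)$, and the $\delta$-structure is the standard one. In particular $A/p^r=W_r(k)$, and the right-hand side of the theorem is $H^i$ of the complex $\Prism_{S/A}\otimes^L_A W_r(k)$.

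Next I would invoke the crystalline comparison of Bhatt--Scholze (\cite{Prisms}, Theorem 5.2), which for a crystalline prism gives a canonical identification $\Prism_{S/A}\simeq R\Gamma_{\mathrm{cris}}(S/A)$. Tensoring with $A/p^r$ yields $\Prism_{S/A}\otimes^L_A A/p^r\simeq R\Gamma_{\mathrm{cris}}(S/W_r(k))$. Applying Theorem \ref{cris} then identifies the right-hand side with $R\Gamma(\Spec S,W_r\Omega^\bullet_{S/k})$. Since $S$ is affine and each $W_r\Omega^i_{S/k}$ is quasi-coherent as a sheaf on $\Spec S$ (being pushed forward from the finite $W_r(S)$-module of the same name), the higher Zariski cohomology vanishes. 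Therefore taking $H^i$ on both sides gives
\[
H^i\bigl((S/A)_\Prism,\mathcal{O}/p^r\bigr)\simeq\mathcal{H}^i\bigl(W_r\Omega^\bullet_{S/k}\bigr).
\]
Finally, the higher Cartier isomorphism of Theorem \ref{HighCart} produces the canonical isomorphism
\[
W_r\Omega^i_{S/k}\xrightarrow{\ \sim\ }\mathcal{H}^iW_r\Omega^\bullet_{S/k},
\]
induced by $F^r\colon W_{2r}\Omega^i_{S/k}\to W_r\Omega^i_{S/k}$. Composing this with the previous chain yields the desired $W_r\Omega^i_{S/(A/p)}\simeq H^i((S/A)_\Prism,\mathcal{O}/p^r)$.

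The main obstacle, and the reason this argument is only a warm-up for the general perfect case in Theorem \ref{perfect_case}, is pinning down canonicity. One must verify that in degree $i=0$ the composite recovers the universal map $W_r(S)\to H^0((S/A)_\Prism,\mathcal{O}/p^r)$ coming from Proposition \ref{map}, and that the Frobenius twist inherent in Theorem \ref{HighCart} is compatible with the Bockstein differential induced by the short exact sequence $0\to\mathcal{O}/p^r\to\mathcal{O}/p^{2r}\to\mathcal{O}/p^r\to 0$, so that the isomorphism is one of cdgas rather than merely of graded $W_r(k)$-modules. Once this compatibility is checked, functoriality in $S$ follows from functoriality of each of the three comparisons used in the chain.
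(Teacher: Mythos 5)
Your chain of identifications is exactly the one the paper uses (higher Cartier, then Langer--Zink de Rham--Witt/crystalline comparison, then prismatic/crystalline comparison), so the approach is essentially the same. One correction is needed, though: the crystalline comparison of Bhatt--Scholze (Theorem 5.2) reads $\phi_A^*\Prism_{S/A}\simeq R\Gamma_{\mathrm{cris}}(S/A)$, with a Frobenius twist on the prismatic side; you state it without the twist, and that twist is precisely where the hypothesis that $k=A/p$ is a \emph{perfect} field is used, since it allows one to identify $\phi^*\Prism_{S^{(1)}/W(k)}$ with $\Prism_{S/W(k)}$ (the paper also first reduces by \'etale localization to $S=k[T_1,\dots,T_n]$ to make this identification transparent). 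The ``Frobenius twist'' you flag at the end belongs to this step of the chain, not to Theorem~\ref{HighCart}.
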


\begin{proof}
Let us denote the field $A/p$ by $k$. Our argument is analogous to the proof of the Hodge--Tate comparison in characteristic~$p$ (see \cite[Corollary~5.5]{Prisms}). By \'etale localization, we can assume that $S=k[T_1,\ldots, T_d]$. By the higher Cartier isomorphism (Theorem~\ref{HighCart}), we have $W_r\Omega^i_{S/k}\simeq H^i(W_r\Omega^\bullet_{S/k})$. Furthermore, by Theorem~\ref{cris}, $H^i(W_r\Omega^\bullet_{S/k})\simeq H^i(Ru_{r*}\mathcal{O}_{S/W_r(k)})$. Then, from the crystalline comparison for prismatic cohomology (see \cite[Theorem~5.2]{Prisms}), it is known that
\[
\begin{aligned}
H^i(Ru_{r*}\mathcal{O}_{S/W_r(k)}) &\simeq H^i(Ru_{r*}\mathcal{O}_{S/W(k)}\otimes_{W(k)}^L W_r(k)) \\
&\simeq H^i(\phi^*\Prism_{S^{(1)}/W(k)}\otimes_{W(k)}^L W_r(k)).
\end{aligned}
\]

Since $S=k[T_1,\ldots, T_n]$ is a polynomial algebra over the perfect field $k$, we have $\phi^*\Prism_{S^{(1)}/W(k)}\simeq\Prism_{S/W(k)}$. Therefore, combining these isomorphisms, we see that \[W_r\Omega^i_{S/k}\simeq H^i(\Prism_{S/W(k)}\otimes_{W(k)}^L W_r(k))\simeq H^i((S/A)_\Prism,\mathcal{O}_\Prism/p^r).
\]
\end{proof}

\section{The de~Rham--Witt comparison}
In this section, we prove Theorem~\ref{perfect_case} and provide an explicit description of prismatic cohomology for a $p$-completed polynomial algebra. Together they constitute the main results of the paper.  

We begin by discussing the connection established in \cite{Prisms} between prismatic cohomology and the results presented in \cite{BMS_I}. 
Let us fix a perfectoid field $C$ of characteristic~$0$ containing  $\mu_{p^\infty}$. Let $R$ be a $p$-completely smooth $\mathcal{O}_C$-algebra.  In \cite{BMS_I}, the authors  constructed a complex $A\Omega$ that relates to prismatic cohomology through the following theorem:

\begin{Th}\label{BMScomp} There is an isomorphism 
\[A\Omega_{R/A}\simeq \Prism_{R^{(1)}/A}=\phi^*_A\Prism_{R/A}
\]
of $E_\infty$-$A$-algebras that is compatible with the Frobenius.
\end{Th}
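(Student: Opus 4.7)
The plan is to reduce to a toric model and then match two explicit computations term-by-term. Both sides satisfy $p$-completely étale localization: for $\Prism_{R/A}$ this is Lemma \ref{Etale}, and the analogous fact for $A\Omega$ is part of its construction in \cite{BMS I}. Hence one may assume $R$ is small, equipped with an étale map from $R^{\square} := \mathcal{O}_C\langle T_1^{\pm 1},\ldots,T_d^{\pm 1}\rangle$, and by a further étale base-change argument reduce to the case $R = R^{\square}$.

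In this toric case, introduce the perfectoid cover $R_\infty := \mathcal{O}_C\langle T_1^{\pm 1/p^\infty},\ldots,T_d^{\pm 1/p^\infty}\rangle$. Since $(A_{\inf}(R_\infty),\xi)$ is a perfect prism over $(A_{\inf},\xi)$, its image in $(R/A_{\inf})_\Prism$ is weakly initial, and the Čech--Alexander nerve of this cover computes $\Prism_{R/A_{\inf}}$ as the totalization of the cosimplicial $(p,\xi)$-completed iterated self-coproducts of $A_{\inf}(R_\infty)$ over $A_{\inf}$ in $\delta$-rings. On the other side, using the same perfectoid cover as a pro-étale cover, the BMS I construction yields
\[
A\Omega_{R/A}\;\simeq\;L\eta_\mu\,R\Gamma_{\mathrm{cont}}\bigl(\Gamma,\,A_{\inf}(R_\infty)\bigr),
\]
with $\Gamma=\mathbb{Z}_p(1)^d$ acting through Teichmüller lifts of the $T_i^{1/p^\infty}$. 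The right-hand side is concretely a Koszul complex on the elements $[\epsilon]^{a_i}-1$ for a basis of characters $a_i$.

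The central step is to identify the Frobenius pullback of the Čech--Alexander complex with this décalée Koszul complex. Applying $\phi^*_A$ sends each generator $[\epsilon]-1$ to $\phi([\epsilon]-1)=[\epsilon]^p-1=\mu\cdot[p]_{[\epsilon]}$, which is divisible by $\mu$; one checks that the Frobenius pullback carries the natural $\delta$-polynomial generators of the Čech--Alexander terms into precisely the submodules defining the décalage $L\eta_\mu$ on the Koszul complex. After this term-by-term match, the cosimplicial face maps and the Frobenius structure align by inspection, and the $E_\infty$-structure comes from the fact that both objects are commutative algebras in the filtered derived category of $A_{\inf}$.

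The main obstacle is this final matching: one must verify that the effect of $\phi^*_A$ on the Čech--Alexander presentation agrees with the décalage operation on the Koszul complex, not merely additively but as a quasi-isomorphism of cosimplicial $E_\infty$-algebras. This relies on an explicit analysis of the $\delta$-ring self-coproducts of $A_{\inf}(R_\infty)$ (flatness, Teichmüller generators, and the identity above for $\phi(\mu)$) together with the standard computation of $L\eta_\mu$ on Koszul complexes from \cite{BMS I}. Once the isomorphism is constructed in the toric case compatibly with the étale localization, étale descent along $R^{\square}\to R$ promotes it to the statement for arbitrary $p$-completely smooth $R$.
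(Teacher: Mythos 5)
The paper itself gives no proof of this statement; it cites \cite{Prisms}, Theorem 17.2. So your proposal is a blind attempt at Bhatt--Scholze's theorem rather than a reconstruction of anything in this text. The framing you set up is the natural one: \'etale localize to a framed toric model $R^{\square}$, realize $A\Omega_{R^{\square}/A_{\inf}}$ as $L\eta_\mu$ applied to continuous $\Gamma$-cohomology, compute $\Prism_{R^{\square}/A_{\inf}}$ by a \v{C}ech--Alexander model built from the perfectoid tower, and let the identity $\phi(\mu)=\mu\cdot[p]_q$ do the work of absorbing the $\mu$-d\'ecalage into $\phi_A^*$. That is the right arithmetic mechanism.

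However, what you call the ``central step'' hides essentially all of the content of the theorem, and two of its assertions fail as stated. First, the cosimplicial object computing $\Prism_{R^{\square}/A_{\inf}}$ is \emph{not} the nerve of the $(p,\xi)$-completed $\delta$-ring self-coproducts of $A_{\inf}(R_\infty)$ over $A_{\inf}$: after reduction mod $\xi$ those coproducts carry two distinct structure maps from $R^{\square}$, so they are not objects of $(R^{\square}/A_{\inf})_\Prism$ at all, and one must replace each term by its prismatic envelope along the ideal forcing the face maps to agree over $R^{\square}$. The structure of these envelopes --- their $q$-divided powers, flatness, and explicit generators --- is precisely what makes the comparison possible, and is not visible in a free $\delta$-ring picture. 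Second, $L\eta_\mu$ does not commute with the totalization producing continuous group cohomology (this is the entire point of the Breuil--Kisin twists), so matching $n$-th terms would not by itself identify the totalizations; this requires the lax symmetric monoidal structure of $L\eta$ together with the explicit $q$-de~Rham computation from \cite{BMS I}. The published proof of \cite{Prisms}, Theorem 17.2 does not attempt your direct match: it first identifies $\phi_A^*\Prism_{R^{\square}/A_{\inf}}$ with the $q$-de~Rham complex via the $q$-crystalline and $q$-de~Rham comparison theorems (an entire section of \cite{Prisms} devoted to $q$-PD envelopes), separately identifies $A\Omega_{R^{\square}/A_{\inf}}$ with the same $q$-de~Rham complex using \cite{BMS I}, and composes. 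Unless you import that intermediate comparison, or redo an equivalent analysis of the prismatic envelopes, the argument stalls exactly where the substance lives. A smaller point: weak initiality of $(A_{\inf}(R_\infty),(\xi))$ in the indiscrete prismatic site is not automatic, since a general map $R^{\square}\to B/\xi B$ need not lift through $R_\infty$; the weakly initial object is usually taken to be the prismatic envelope of a free $\delta$-ring surjection onto $R^{\square}$, or else one works in the flat topology and invokes descent.
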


\begin{proof}
See \cite[Theorem~17.2]{Prisms}.
\end{proof}

This isomorphism enables us to prove Theorem~\ref{perfect_case} in a special case. First, recall the following sequence of isomorphisms established in \cite[Theorem~9.2]{BMS_I}:
\[H^i(A\Omega_{R/\mathcal{O}_C}\otimes^L_{A_{\inf}(\mathcal{O}_C),\Tilde{\theta}_r}W_r(\mathcal{O}_C))\simeq H^i(\widetilde{W_r\Omega}_{R/\mathcal{O}_C})\simeq W_r\Omega^{i, cont}_{R/\mathcal{O}_C}\{-i\}.
\]
From Theorem~\ref{BMScomp} and Proposition~\ref{map}, we derive that 
\[H^i(\Prism_{R/A}\otimes^L_A A/d\phi(d)\cdots\phi^{r-1}(d))\simeq H^i(A\Omega_{R/\mathcal{O}_C}\otimes^L_{A_{\inf}(\mathcal{O}_C),\Tilde{\theta}_r}W_r(\mathcal{O}_C)).
\]
It is important to note that the map constructed in Proposition~\ref{map} is inverse to $\Tilde{\theta}_r$ up to $\phi^{-1}$. Thus, in this specific case, the statement of Question~\ref{main} holds true. For notational convenience, we shall henceforth omit the Breuil--Kisin twist in the notation of de~Rham--Witt forms.

The proof of Theorem~\ref{perfect_case} relies fundamentally on Andr\'e's lemma, which was originally established in \cite{An} and later reproved in \cite{Prisms}.

\begin{Lemma}[Andr\'e's flatness lemma] Let $R$ be a perfectoid ring. Then there exists a $p$-completely faithfully flat map $R\to S$ of perfectoid rings such that $S$ is absolutely integrally closed. In particular, every element of $S$ admits a compatible system of $p$-power roots.
\end{Lemma}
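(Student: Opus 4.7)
The plan is to construct $S$ by a transfinite iteration of ``atomic'' perfectoid extensions, each of which adjoins a root of one monic polynomial over the current stage. At successor stages one applies the atomic construction; at limit stages one takes the $p$-completed filtered colimit, which remains perfectoid (the tilt of such a colimit is the filtered colimit of the tilts, and perfectness of $\mathbb{F}_p$-algebras is stable under filtered colimits; principality of $\ker\theta$ and the absence of pathological $p^\infty$-torsion propagate because they hold at each stage) and remains $p$-completely faithfully flat over $R$, since this property is stable under $p$-completed filtered colimits. Iterating long enough to exhaust every monic polynomial over every intermediate stage produces an $S$ which is absolutely integrally closed by construction; the second assertion, about compatible systems of $p$-power roots, is just the special case $P(X) = X^p - f$ iterated.

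The atomic step is the following: given a perfectoid $T$ and a monic $P \in T[X]$, construct a $p$-completely faithfully flat perfectoid $T$-algebra $T'$ in which $P$ acquires a root. The cleanest approach is via the tilting equivalence of Theorem~\ref{Cat}. In the ideal situation where $P$ descends to a monic polynomial $P^\flat$ over the perfect $\mathbb{F}_p$-algebra $T^\flat$, one forms the perfection of $T^\flat[X]/(P^\flat(X))$, which is a faithfully flat perfect extension of $T^\flat$ because $P^\flat$ is monic. Untilting via $W(-)$, and taking the quotient by the distinguished element generating $\ker\theta$ of the untilted perfect prism, yields the desired perfectoid $T'$ over $T$. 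Both flatness and the perfectoid condition are transparent after tilting, so it is here that the bulk of the verification is carried out.

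The main obstacle is the subtlety swept under the rug in the preceding paragraph: the polynomial $P$ lives over $T$, not over $T^\flat$, and in general its coefficients admit no canonical lifts to $T^\flat$. The remedy is to first enlarge $T$ by adjoining compatible systems of $p$-power roots of all coefficients of $P$ (iterating the atomic ``$X^p - f$'' construction), so that after this preliminary enlargement each coefficient of $P$ lies in the image of the sharp map $T^\flat \to T$ and $P$ does admit a tilt $P^\flat$. Interleaving these two families of atomic extensions --- adjoining $p$-power roots of elements and adjoining roots of monic polynomials --- and verifying that the resulting transfinite iteration converges to an absolutely integrally closed perfectoid ring constitutes the bulk of the work; this is carried out in Andr\'e's original argument \cite{An} and revisited in [\cite{Prisms}, Theorem 7.12], after which the flatness and perfectoid-ness at each atomic step reduce, via tilting, to easy characteristic-$p$ assertions.
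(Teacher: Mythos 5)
Your overall architecture matches the standard one (André's, as revisited in [Prisms, Thm.~7.12]): a transfinite iteration whose atomic steps adjoin roots of monic polynomials, with a preliminary sub-step adjoining compatible $p$-power roots of the coefficients so that the polynomial descends to the tilt, and with $p$-completed filtered colimits at limit ordinals. Your treatment of the step where the polynomial \emph{does} tilt is also essentially right: if $P^\flat\in T^\flat[X]$ is monic, then $T^\flat[X]/(P^\flat)$ is finite free over $T^\flat$, its perfection is a filtered colimit of finite free $T^\flat$-modules of the same rank (Frobenius on $T^\flat$ being an isomorphism), hence flat, and faithfulness is checked on residue fields; untilting along the perfect prism preserves ($p$-complete) faithful flatness because $W(-)$ is exact on perfect $\mathbb{F}_p$-algebras and the distinguished generator of $\ker\theta$ stays a nonzerodivisor.

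The gap is in the sentence claiming that ``the flatness and perfectoid-ness at each atomic step reduce, via tilting, to easy characteristic-$p$ assertions.'' That is true for the monic-polynomial step \emph{given} that its coefficients lift to $T^\flat$, but it is false for the step you need to make that hypothesis hold, namely adjoining a compatible system of $p$-power roots of an arbitrary $g\in T$. The map $T\to T\langle g^{1/p^\infty}\rangle$ cannot be analyzed by tilting: $g$ has no image in $T^\flat$ precisely because it does not yet have $p$-power roots, so there is no ``$g^\flat$'' to quotient by on the tilt side, and writing $T\langle g^{1/p^\infty}\rangle$ as a $p$-completed base change $T\langle X^{1/p^\infty}\rangle\widehat{\otimes}_{T\langle X\rangle}T$ along the non-flat map $X\mapsto g$ makes the flatness of the result genuinely non-obvious. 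Your attempt to treat this as ``$P(X)=X^p-f$ iterated'' is circular: to tilt $X^p-f$ you would already need $f$ to lift to $T^\flat$, i.e.\ to have $p$-power roots. The $p$-complete faithful flatness of $T\to T\langle g^{1/p^\infty}\rangle$ is exactly André's flatness theorem, the hard input to the whole argument, and it is established in \cite{An} and in [\cite{Prisms}, around Thm.~7.12] by a separate, non-tilting argument (almost purity / quasi-syntomic descent). Your proposal should isolate that lemma and treat it as the one genuinely deep ingredient, rather than folding it into the ``easy after tilting'' category. (The paper itself offers no argument here --- it simply cites [\cite{Prisms}, Thm.~7.12] --- so the comparison is really with the cited proof.)
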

\begin{proof}
See \cite[Theorem~7.14]{Prisms}.
\end{proof}

We now state the main result of this paper:

\begin{Th}\label{Proof}
Let $(A,d)$ be a perfect prism such that $A/d$ is $p$-torsion-free, and let $S$ be a $p$-completely smooth $A/d$-algebra. Then there exists a functorial isomorphism
\[W_n\Omega_{S/(A/d)}^{i, cont}\to H^i(\Prism_{S/A}\otimes^L_A A/d\phi(d)\cdots\phi^{n-1}(d)).
\]
\end{Th}

\begin{Rem}\label{unique}
Following the approach in \cite[Construction~4.9 and the discussion after Lemma~4.10]{Prisms}, one can verify that $H^*(\Prism_{S/A}\otimes^L_A A/d\phi(d)\cdots\phi^{n-1}(d))$ is a differential graded $A/d\phi(d)\cdots\phi^{n-1}(d)$-algebra with the Bockstein homomorphism as its differential. This algebra is also graded commutative by generalities on cohomology of commutative ring objects in a topos. Moreover, it is clear that when $p\neq 2$, this dga is strictly commutative.

If we denote by $\eta\colon W_n(S)\to H^0(\Prism_{S/A}\otimes^L_A A/d\phi(d)\cdots\phi^{n-1}(d))$ the structure morphism, then, by the universal property of the de~Rham complex, $\eta$ extends uniquely to a map
\[\Omega^*_{W_n(S)/W_n(A/d)}\to H^*(\Prism_{S/A}\otimes^L_A A/d\phi(d)\cdots\phi^{n-1}(d))\]
of $W_n(A/d)$-dgas.

Since $W_n\Omega^*_{S/(A/d)}$ is a quotient of $\Omega^*_{W_n(S)/W_n(A/d)}$, we can conclude that the desired map from $W_n\Omega^*_{S/(A/d)}$ to  $H^*(\Prism_{S/A}\otimes^L_A A/d\phi(d)\cdots\phi^{n-1}(d))$ is unique if it exists. Since the last term is $p$-completed, the same conclusion applies to $W_n\Omega^{*, cont}_{S/(A/d)}$. This observation will play a crucial role in the proof of Theorem~\ref{Proof}.
\end{Rem}

\begin{proof} Our proof proceeds through several reduction steps.

Since $(A,d)$ is a perfect prism by assumption, $R\coloneq A/d$ is a perfectoid ring. By Andr\'e's lemma, there exists a $p$-completely faithfully flat map $R\to\Tilde{R}$, where $\Tilde{R}$ is absolutely integrally closed. In particular, $\Tilde{R}$ has a compatible system of $p$-power roots of unity, which induces a map $\hat{\overline{\mathbb{Z}}}_p\to\Tilde{R}$. We first demonstrate how to derive the comparison isomorphism for $\Tilde{R}$ from the comparison for $\hat{\overline{\mathbb{Z}}}_p$ that was elaborated at the beginning of this section. We recall the following well-known lemma from homological algebra:

\begin{Lemma}\label{superlemma}
Let $P\to Q$ be any ring homomorphism and $C\in \mathcal{D}^{-}(P)$. If
\[
H^j(C)\otimes^L_P Q\simeq H^j(C)\otimes_P Q
\]
for all $j$, then $H^j(C\otimes_P^L Q)\simeq H^j(C)\otimes_PQ$.
\end{Lemma}
\begin{proof}
    By \stacktag{0662}, there exists a spectral sequence with
    \[
    E_2^{i, j}=H^i(H^j(C)\otimes_P^L Q)\Rightarrow H^{i+j}(C\otimes_P^L Q).
    \]
    Since $H^i(H^j(C)\otimes_P^L Q)\simeq \Tor_i^P(H^j(C),Q)$, the isomorphism 
    \[
    H^j(C)\otimes^L_P Q\simeq H^j(C)\otimes_P Q
    \]
    implies that  $E_2^{i,j}$ vanishes unless $i=0$. Consequently, this spectral sequence yields the required isomorphism. 
\end{proof}

\begin{Rem}\label{Completed}
A parallel argument applied to the completed tensor product instead of the ordinary one shows that an analogous version of Lemma~\ref{superlemma} holds for completed base change.
\end{Rem}

We now reduce the problem to the case of the $p$-completed polynomial algebra $\Tilde{R}\langle T_1,\ldots, T_k\rangle$. Let  $S'$ be a $p$-completely smooth $\tilde{R}$-algebra. Then there exists a covering of $\Spf(S')$ by affine open subsets $U_i=\Spf(A_i)$ such that for each $A_i$, there is a $p$-completely \'etale map $\Tilde{R}\langle T_1,\ldots, T_{k_i}\rangle \to A_i$. By applying \'etale  localization  to both the relative de~Rham--Witt complex (see Lemma~\ref{Deter}) and the prismatic complex (see Remark~\ref{etale}), it suffices to prove the result for a $p$-completed polynomial algebra to establish the required comparison for each $A_i$.

Thus, we can assume that $\Spf(S')$ is covered by open sets $U_i$ for which the required isomorphism holds. When we restrict these isomorphisms for $U_i$ and $U_j$ to the intersection $U_i\cap U_j=\Spf(S'_{ij})$, we obtain two maps (which are isomorphisms) from the de~Rham--Witt forms $W_n\Omega_{S'_{ij}/\tilde{R}}^{i, cont}$ to $H^i(\Prism_{S'_{ij}/A_{\inf}(\tilde{R})}\otimes^L_{A_{\inf}(\tilde{R})}A_{\inf}(\tilde{R})/d\cdots\phi^{n-1}(d))$. By Remark~\ref{unique}, such a map must be unique, hence, these two isomorphisms coincide. Therefore, the gluing  condition is satisfied, and we automatically obtain the required isomorphism  as a map of sheaves on $\Spf(S')$. Moreover, by the local-to-global spectral sequence and the vanishing of cohomology of de~Rham--Witt forms on affines, this isomorphism also holds in terms of modules. Thus, we obtain  the required statement for $S'$, assuming the result for $\tilde{R}\langle T \rangle=\Tilde{R}\langle T_1,\ldots, T_k\rangle$.

Now we return to the map $\hat{\overline{\mathbb{Z}}}_p\to\tilde{R}$ and explain how to deduce the comparison for $\tilde{R}$ from the isomorphism for $\hat{\overline{\mathbb{Z}}}_p$. Let $C$ denote  $\Prism_{\hat{\overline{\mathbb{Z}}}_p\langle T\rangle /A_{\inf}(\hat{\overline{\mathbb{Z}}}_p)}\otimes^L_{A_{\inf}(\hat{\overline{\mathbb{Z}}}_p)}W_n(\hat{\overline{\mathbb{Z}}}_p)$. Assuming the comparison holds for $\hat{\overline{\mathbb{Z}}}_p$, we have $H^i(C)\simeq W_n\Omega^{i, cont}_{\hat{\overline{\mathbb{Z}}}_p\langle T\rangle /\hat{\overline{\mathbb{Z}}}_p}$. Since $\tilde{R}$ is perfectoid, by \cite[Proposition 10.14]{BMS_I}, we see that
\[
W_n\Omega^{i, cont}_{\hat{\overline{\mathbb{Z}}}_p\langle T\rangle/\hat{\overline{\mathbb{Z}}}_p}\widehat{\otimes}^L_{W_n(\hat{\overline{\mathbb{Z}}}_p)} W_n(\Tilde{R})\simeq W_n\Omega^{i, cont}_{\hat{\overline{\mathbb{Z}}}_p\langle T\rangle /\hat{\overline{\mathbb{Z}}}_p}\widehat{\otimes}_{W_n(\hat{\overline{\mathbb{Z}}}_p)} W_n(\Tilde{R})\simeq W_n\Omega^{i, cont}_{\Tilde{R}\langle T\rangle/\Tilde{R}}.
\]
Consequently,  by applying Lemma~\ref{superlemma} and Remark~\ref{Completed}, we obtain the second isomorphism in the following sequence:
\[ W_n\Omega^{i, cont}_{\Tilde{R}\langle T\rangle/\Tilde{R}}\simeq W_n\Omega^{i, cont}_{\hat{\overline{\mathbb{Z}}}_p\langle T\rangle /\hat{\overline{\mathbb{Z}}}_p}\widehat{\otimes}_{W_n(\hat{\overline{\mathbb{Z}}}_p)}W_n(\Tilde{R})\simeq H^i(C\widehat{\otimes}^L_{W_n(\hat{\overline{\mathbb{Z}}}_p)}W_n(\Tilde{R})).
\]
The last term is precisely 
$H^i(\Prism_{\Tilde{R}\langle T\rangle/A_{\inf}(\Tilde{R})}\otimes^L_{A_{\inf}(\Tilde{R})} W_n(\Tilde{R}))$, since
\[
H^i(\Prism_{\hat{\overline{\mathbb{Z}}}_p\langle T\rangle /A_{\inf}(\hat{\overline{\mathbb{Z}}}_p)}\otimes^L_{A_{\inf}(\hat{\overline{\mathbb{Z}}}_p)}W_n(\hat{\overline{\mathbb{Z}}}_p)\widehat{\otimes}^L_{W_n(\hat{\overline{\mathbb{Z}}}_p)}W_n(\Tilde{R}))
\]
is exactly
\[
H^i(\Prism_{\hat{\overline{\mathbb{Z}}}_p\langle T\rangle /A_{\inf}(\hat{\overline{\mathbb{Z}}}_p)}\widehat{\otimes}^L_{A_{\inf}(\hat{\overline{\mathbb{Z}}}_p)}W_n(\Tilde{R}))\simeq H^i(\Prism_{\Tilde{R}\langle T \rangle/A_{\inf}(\tilde{R})}\otimes^L_{A_{\inf}(\tilde{R})}W_n(\tilde{R})),
\]
where the last isomorphism follows from \cite[Lemma~4.20]{Prisms}. Thus, the result holds for~$\Tilde{R}$.

Furthermore, the argument above applies to any perfectoid ring $Q$ in place of $\hat{\overline{\mathbb{Z}}}_p$ and any perfectoid ring $Q'$ under $Q$ in place of  $\Tilde{R}$. Indeed, the proof remains essentially unchanged and relies solely on the base change (in case of perfectoid rings) and \'etale localization properties for the de~Rham--Witt complex and the prismatic complex. Therefore, we conclude that if the result is true for some perfectoid $Q$, then it also holds for any perfectoid ring $Q'$ under $Q$.

Finally, we prove the comparison isomorphism for $R$, assuming the result holds for~$\Tilde{R}$. Since $R\to\Tilde{R}$ is $p$-completely faithfully flat and both $A_{\inf}(R)$ and $A_{\inf}(\Tilde{R})$ are $d$-torsion-free, it follows that $A_{\inf}(R)\to A_{\inf}(\Tilde{R})$ is $(p,d)$-completely faithfully flat. Consequently,  $W_n(R)/p^m\to W_n(\Tilde{R})/p^m$ must be faithfully flat. By Theorem~\ref{Cat} and Proposition~\ref{map}, we have $W_n(R)\simeq A_{\inf}(R)/d\phi(d)\cdots\phi^{n-1}(d)$.  Since both rings are perfectoid, by \cite[Proposition~10.14]{BMS_I},  we see that
\[
\begin{aligned}
W_n\Omega^{i, cont}_{S/R}/p^m\otimes_{W_n(R)/p^m}W_n(\Tilde{R})/p^m &\simeq (W_n\Omega^{i, cont}_{S/R}\otimes_{W_n(R)} W_n(\Tilde{R}))/p^m \\ &\simeq W_n\Omega^{i, cont}_{S_{\Tilde{R}}/\Tilde{R}}/p^m.
\end{aligned}
\]
Furthermore, since $W_n(R)/p^m\to W_n(\Tilde{R})/p^m$ is faithfully flat, by Lemma~\ref{superlemma}, we know that
\[H^i(\Prism_{S/A_{\inf}(R)}\otimes^L_{A_{\inf}(R)}W_n(R))/p^m\otimes^L_{W_n(R)/p^m}W_n(\Tilde{R})/p^m
\]
is isomorphic to
\[
H^i(\Prism_{S/A_{\inf}(R)}\otimes^L_{A_{\inf}(R)}W_n(\Tilde{R}))/p^m\simeq H^i(\Prism_{S_{\Tilde{R}}/A_{\inf}(\Tilde{R})}\otimes^L_{A_{\inf}(\Tilde{R})}W_n(\Tilde{R}))/p^m,
\]
where the last isomorphism follows from \cite[Lemma~4.20]{Prisms}.
Here $S_{\Tilde{R}}$ denotes  the base change $S\widehat{\otimes}_R \Tilde{R}$.

Next, we apply faithfully flat descent for $W_n(R)/p^m\to W_n(\Tilde{R})/p^m$. Let us denote $W_n(R)/p^m$ by $B$, $W_n(\Tilde{R})/p^m$ by $C$, $W_n\Omega^i_{S/R}/p^m$ by $M$, and the prismatic cohomology $H^i(R\Gamma_\Prism(S/A_{\inf}(R))\otimes^L_{A_{\inf}(R)}W_n(R))/p^m$ by $N$. Since $B\to C$ is faithfully flat, we have two exact Amitsur complexes:

\[
0\to M\to M\otimes_B C \to M\otimes_B C\otimes_B C\to\ldots
\]
and
\[
0\to N\to N\otimes_B C\to N\otimes_B C\otimes_B C\to \ldots.
\]

From our previous discussion, we know that $M\otimes_B C$ and $N\otimes_B C$ are isomorphic, as they appear in the comparison isomorphism for $\Tilde{R}\mod{p^m}$. Furthermore, since $\Tilde{R}\otimes_R\Tilde{R}$ is perfectoid over $\Tilde{R}$, by the first part of the proof, we know the comparison isomorphism holds for it. The same argument as above shows that the terms of that isomorphism $\mod{p^m}$ are precisely $M\otimes_B C\otimes_B C$ and $N\otimes_B C\otimes_B C$. Using the same machinery for other terms of the Amitsur complexes, we obtain the following commutative diagram:
\[
\xymatrix{
0\ar[r]& M\ar[r]\ar@{-->}[d] & M\otimes_B C \ar[r]\ar[d]^{\simeq} & M\otimes_B C\otimes_B C \ar[d]^{\simeq}\ar[r] & M\otimes_B C\otimes_B C\otimes_B C\ar[d]^{\simeq}\ar[r] &\ldots   \\
0 \ar[r] & N \ar[r] & N\otimes_B C\ar[r] & N\otimes_B C\otimes_B C \ar[r] &N\otimes_B C\otimes_B C\otimes_B C\ar[r] & \ldots,}
\]
where all solid vertical arrows are isomorphisms. Therefore, we conclude that the dashed vertical arrow gives an isomorphism between $M$ and $N$.

Thus, we establish the  comparison isomorphism $\mod{p^m}$ for $R$. After taking the limit, we obtain the result for $R$. This completes the proof for any perfect prism~$(A, d)$ such that $A/d$ is $p$-torsion-free.
\end{proof}

\begin{Rem}\label{75}
Let us reformulate the previous result explicitly for the $A/d$-algebra $S=A/d\langle T_1,\ldots, T_k \rangle$, which is the $p$-adic completion of the  polynomial algebra $S_0=A/d[T_1, \ldots, T_k]$ over $A/d$. By definition, we have $W_n\Omega^{i, cont}_{S/(A/d)}\simeq W_n\Omega^{i, cont}_{S_0/(A/d)}$. In what follows, we will frequently encounter the notation $\bigoplus\bigoplus$. By this, we mean $\bigoplus_{a\colon \{1,\ldots, k\}\to p^{-n}\mathbb{Z}_{\geq 0}}\bigoplus_{(I_0,\ldots, I_i)\in P_a}$, using the notations of Theorem~\ref{poly}. From this theorem, we know that 
\[\bigoplus\bigoplus V^{u(a)}W_{n-u(a)}(A/d)\simeq W_n\Omega^i_{S_0/(A/d)}. 
\]
Since $(A, d)$ is perfect, by Proposition~\ref{map} and Remark~\ref{rem2}, we obtain the isomorphism of $A/d\phi(d)\cdots \phi^{n-1}(d)$-modules
\[\bigoplus\bigoplus V^{u(a)}W_{n-u(a)}(A/d)\simeq \bigoplus\bigoplus A/d\phi(d)\cdots\phi^{n-1-u(a)}(d). 
\]
More precisely, we should write the latter term as
\[
\bigoplus\bigoplus \phi^{n-u(a)}(d)\cdots\phi^{n-1}(d)(A/d\phi(d)\cdots\phi^{n-1-u(a)}(d)),
\]
however, we will omit the multiplier $\phi^{n-u(a)}(d)\cdots\phi^{n-1}(d)$ in what follows, considering it as part of $A/d\phi(d)\cdots\phi^{n-1}(d)$-module structure on $A/d\phi(d)\cdots\phi^{n-1-u(a)}(d)$.

Finally, we can write $W_n\Omega^{i, cont}_{S_0/(A/d)}$ as
\[
\begin{aligned}
\varprojlim_{s} W_n\Omega^i_{(S_0/p^s)/((A/d)/p^s)}&\simeq\varprojlim_s W_n\Omega^i_{S_0/(A/d)}/p^s\\&\simeq\varprojlim_s \bigoplus\bigoplus A/d\phi(d)\cdots\phi^{n-1-u(a)}(d)/p^s,
\end{aligned}
\]
which yields the following isomorphism:
\[
\left(\bigoplus\bigoplus A/d\phi(d)\cdots\phi^{n-1-u(a)}(d)\right)^{\wedge_p}\simeq H^i(\Prism_{S/A}\otimes^L_{A}A/d\phi(d)\cdots\phi^{n-1}(d)).
\]
\end{Rem}

We now aim to extend this result to more general settings.

\begin{Lemma}\label{polynomial}
Let $(\bar{A}, d)\to (A, d)$ be a map of prisms, and let $S=A/d\langle T_1, \ldots, T_k\rangle$ be a $p$-completed polynomial $A/d$-algebra. Assume that $\phi^i(d)$ are non-zero-divisors in both $\bar{A}$ and $A$ for all $i\geq 0$. Further, suppose that
\[
\left(\bigoplus\bigoplus \bar{A}/d\phi(d)\cdots\phi^{n-1-u(a)}(d)\right)^{\wedge_p}\simeq H^i(\Prism_{\bar{S}/\bar{A}}\otimes^L_{\bar{A}}\bar{A}/d\phi(d)\cdots\phi^{n-1}(d)),
\]
where $\bar{S}=\bar{A}/d\langle T_1,\ldots, T_k\rangle$. Then there exists an explicit functorial isomorphism 
\[
\left(\bigoplus\bigoplus A/d\phi(d)\cdots\phi^{n-1-u(a)}(d)\right)^{\wedge_p}\simeq H^i(\Prism_{S/A}\otimes^L_{A}A/d\phi(d)\cdots\phi^{n-1}(d))
\]
of $A/d\phi(d)\cdots\phi^{n-1}(d)$-modules.
\end{Lemma}

\begin{proof}
From our assumptions, we know that 
\[
\left(\bigoplus\bigoplus \bar{A}/d\phi(d)\cdots\phi^{n-1-u(a)}(d)\right)^{\wedge_p}\simeq H^i(\Prism_{{\bar{S}/\bar{A}}}\otimes^L_{\bar{A}}\bar{A}/d\phi(d)\cdots\phi^{n-1}(d)),
\]
where $\bar{S}=\bar{A}/d\langle T_1,\ldots, T_k\rangle$. From the non-zero-divisor condition, we have 
\[
\left(\bigoplus\bigoplus \bar{A}/d\cdots\phi^{n-1-u(a)}(d)\right)^{\wedge_p}\widehat{\otimes}^L_{\bar{A}}A\simeq \left(\bigoplus\bigoplus A/d\cdots\phi^{n-1-u(a)}(d)\right)^{\wedge_p}
\]
and
\[
\left(\bigoplus\bigoplus \bar{A}/d\cdots\phi^{n-1-u(a)}(d)\right)^{\wedge_p}\widehat{\otimes}_{\bar{A}}A\simeq \left(\bigoplus\bigoplus A/d\cdots\phi^{n-1-u(a)}(d)\right)^{\wedge_p}.
\]
Let us denote $\Prism_{{\bar{S}/\bar{A}}}\otimes^L_{\bar{A}}\bar{A}/d\phi(d)\cdots\phi^{n-1}(d)$ by $T$.
By the previous observation and Remark~\ref{Completed}, we see that 
\[H^i(T\widehat{\otimes}^L_{\bar{A}}A)\simeq H^i(T)\widehat{\otimes}_{\bar{A}}A.
\]
This precisely means that 
\[ H^i(\Prism_{S/A}\otimes^L_{A}A/d\phi(d)\cdots\phi^{n-1}(d))\simeq \left(\bigoplus\bigoplus A/d\phi(d)\cdots\phi^{n-1-u(a)}(d)\right)^{\wedge_p},
\]
which gives us the desired result.
\end{proof}

Lemma~\ref{polynomial} shows that under the given non-zero-divisor condition, the explicit isomorphism for $\bar{A}$ implies the corresponding result for $A$.

\begin{Prop}\label{lalalala}
Let $(A, d)$ be either the universal oriented prism or a free prism over the universal one, and let $S=A/d\langle T_1,\ldots, T_k\rangle$. Then there exists an explicit functorial isomorphism
\[\left(\bigoplus\bigoplus A/d\phi(d)\cdots\phi^{n-1-u(a)}(d)\right)^{\wedge_p}\simeq H^i(\Prism_{S/A}\otimes^L_{A}A/d\phi(d)\cdots\phi^{n-1}(d))
\]
of $A/d\phi(d)\cdots\phi^{n-1}(d)$-modules.
\end{Prop}

\begin{proof}
By Remark~\ref{75}, we already have an explicit isomorphism for $A_\infty$. Moreover, by \cite{Prisms}, we know that $A\to A_\infty$ is $(p,d)$-completely faithfully flat. We now apply the faithfully flat descent argument, similar to that used in the proof of Theorem~\ref{Proof}. For this purpose, recall that, by \cite[Proposition~1.9]{Tian}, the Amitsur complex $A\to (A_\infty)^{\otimes\bullet}$, associated to the \v{C}ech nerve $(A_\infty)^{\otimes\bullet}$ of $A\to A_\infty$ is exact. Therefore, it suffices to establish the isomorphisms for all terms of this nerve. By the explicit construction of the universal prism, each term $(A_\infty)^{\otimes k}$ satisfies the non-zero-divisor condition specified in Lemma~\ref{polynomial}. Applying this lemma, we inductively obtain the required isomorphisms for all terms of the \v{C}ech nerve of $A\to A_\infty$, allowing us to apply descent along it to get the explicit isomorphism for $A$.
\end{proof}

We can now extend this result to a wider class of prisms.

\begin{Prop}\label{res1}
Let $(A, d)$ be a prism, and let $S=A/d\langle T_1,\ldots,T_k\rangle$ be a $p$\nobreakdash-complet\-ed polynomial $A/d$-algebra. If  $\phi^i(d)$ are non-zero-divisors in $A$ for all $i\geq 0$, then there exists a functorial isomorphism
\[\left(\bigoplus\bigoplus A/d\phi(d)\cdots\phi^{n-1-u(a)}(d)\right)^{\wedge_p}\simeq H^i(\Prism_{S/A}\otimes^L_{A}A/d\phi(d)\cdots\phi^{n-1}(d))
\]
of $A/d\phi(d)\cdots\phi^{n-1}(d)$-modules.
\end{Prop}

\begin{proof}
We choose a surjection $\bar{A}\to A$ of prisms, where  $\bar{A}$ is a free prism over the universal one, and then apply Lemma~\ref{polynomial} and Proposition~\ref{lalalala} to obtain the result.
\end{proof}

\appendix

\section{Alternative proof of Proposition~\ref{map}}\label{appendix}
In this appendix, we present an alternative method for constructing maps 
\[
W_n(A/d)\to A/d\phi(d)\cdots\phi^{n-1}(d),
\]
proposed by an anonymous referee. Unlike our original proof, this approach does not provide an explicit formula for the images of the generators in  $\prod_{i=0}^{n-1}A/\phi^i(d)$, but it is somewhat simpler.

\begin{Prop}\label{map1}
For any prism $(A, d)$, there exists a functorial map 
\[W_n(A/d)\to A/d\phi(d)\cdots\phi^{n-1}(d).
\]
\end{Prop}
\begin{proof}
First, assume that $(A,d)$ is a prism such that $A/d$ is $p$-torsion-free. In this case, by faithfully flat descent, it suffices to work with $(A, d)$, where the Frobenius $\phi\colon A\to A$ is surjective. Indeed, the proof is similar to the argument using Amitsur complexes in Proposition~\ref{map}, one just needs to check that the Frobenius on $A_\infty\widehat{\otimes}_AA_\infty$ is surjective, which is straightforward. Therefore, we may assume that the Frobenius is surjective on $A$.

Under this assumption, by the universal property of Witt vectors, we obtain a map of $\delta$-rings $A\to W(A/d)$ such that the composite $A\to W(A/d)\to W_n(A/d)$ is surjective. Next, consider the composition of $\phi^{n-1}\colon A\to A$ with the natural projection $A\to A/d\phi(d)\cdots\phi^{n-1}(d)$. We claim that this composition factors through the surjection $A\twoheadrightarrow W_n(A/d)$. For this, it is enough to show that if $x\in A$ maps to zero in $W_n(A/d)$, then $\phi^i(x)\equiv 0 \mod{d}$ for all $0\leq i\leq n-1$. Indeed, if these congruences hold, then, by applying the corresponding power of $\phi$, it follows that $y=\phi^{n-1}(x)\equiv\ 0 \mod{\phi^i(d)}$ for $0\leq i\leq n-1$. The first argument of Proposition~\ref{map} provides an inclusion  $A/d\phi(d)\cdots\phi^{n-1}(d)\hookrightarrow\prod_{i=0}^{n-1}A/\phi^i(d)$ (also see Remark~\ref{Bras}). This inclusion implies that $y=\phi^{n-1}(x)\equiv 0 \mod{d\phi(d)\cdots\phi^{n-1}(d)}$, which is our desired result.

Thus, we need to show that if $x\in A$ maps to zero in $W_n(A/d)$, then $\phi^i(x)\in (d)$. For this, consider the Witt vector Frobenius $F^i\colon W_n(A/d)\to W_{n-i}(A/d)$ composed with the projection $W_{n-i}(A/d)\to A/d$. From the commutative diagram
\[\xymatrix{
A\ar[r]\ar[d]^{\phi^i}  & W_n(A/d)\ar[d]^{F^i} \\
A\ar[r] & W_{n-i}(A/d),}
\]
we see easily that $\phi^i(x)\in (d)$ as desired.

The extension to an arbitrary oriented prism $(A,d)$ follows the same argument as the final paragraph of the proof of Proposition~\ref{map}, so we omit it here.
\end{proof}

\begin{Rem}
    From the argument above, it is clear that the constructed map
    \[
    W_n(A/d)\to A/d\phi(d)\cdots\phi^{n-1}(d)
    \]
    is the unique map for which the composition  $A\to W_n(A/d)\to A/d\phi(d)\cdots\phi^{n-1}(d)$ coincides with $A\xrightarrow{\phi^{n-1}} A\to A/d\phi(d)\cdots\phi^{n-1}(d)$.
\end{Rem}

\begin{Rem}
We also note that a more conceptual construction of this map was later obtained by Yuri Sulyma in \cite{Sul}. His approach leverages the relationship between prisms and Tambara functors. 
\end{Rem}

\begin{bibdiv}
\begin{biblist}

\bib{An}{article}{

title={La conjecture du facteur direct},
author={Yves Andr\'e},
journal={Publ. Math. Inst. Hautes \'Etudes Sci.},
volume={127},
date={2018},
pages={71--93}
}

\bib{Arr}{article}{

title={Prismatic Dieudonn\'e theory},
author = {Johannes Ansch\"utz},
author={Arthur-C\'esar Le Bras},
journal={arXiv:1907.10525v3},
date={2021},
}

\bib{Ar}{article}{

title={The $p$-completed cyclotomic trace in degree 2},
author = {Johannes Ansch\"utz},
author={Arthur-C\'esar Le Bras},
journal={Annals of K-Theory},
volume={5},
number = {3},
date={2020},
pages={539--580}
}

\bib{Bhatt}{article}{

title={Geometric aspects of $p$-adic Hodge theory: prismatic cohomology},
journal ={Eilenberg Lectures at Columbia University},
author={Bhargav Bhatt},
note={Available at \url{https://www.math.ias.edu/~bhatt/teaching/prismatic-columbia/}}
}

\bib{BHLURIE}{article}{

title={Absolute prismatic cohomology},
author = {Bhargav Bhatt},
author={Jacob Lurie},
journal={arXiv:2201.06120},
date={2022},
}

\bib{BMS_I}{article}{

title={Integral $p$-adic Hodge theory},
author={Bhargav Bhatt},
author={Mathew Morrow},
author={Peter Scholze},
journal={Publ. Math. Inst. Hautes \'Etudes Sci.},
volume={128},
date={2018},
pages={219--397}
}

\bib{Prisms}{article}{

title={Prisms and prismatic cohomology},
author={Bhargav Bhatt},
author={Peter Scholze},
journal={Ann. of Math.},
volume={196},
number={3},
date={2022},
pages={1135--1275}
}

\bib{Borg}{article}{

title={Witt vectors, lambda-rings, and arithmetic jet spaces},
journal ={Course at the University of Copenhagen},
author={James Borger},
note={Available at \url{https://maths-people.anu.edu.au/~borger/classes/copenhagen-2016/index.html}}
}

\bib{Il}{article}{

title={ Complexe de de Rham-Witt et cohomologie cristalline},
author={Luc Illusie},
journal={Ann. Sci. Ecole Norm. Sup.},
volume={12},
date={1979},
number = {4},
pages={501--661}
}

\bib{Ill}{article}{

title={De Rham-Witt complexes and p-adic Hodge theory},
author={Luc Illusie},
journal={Pre-notes for Sapporo seminar},
note={Available at \url{https://www.math.u-psud.fr/~illusie/Illusie-Sapporo1a.pdf}},
date={2011}
}

\bib{IlR}{article}{

title={Les suites spectrales associees au complexe de de Rham-Witt},
author={Luc Illusie},
author={Michel Raynaud},
journal={Publ. Math. Inst. Hautes \'Etudes Sci.},
volume={57},
date={1983},
pages={73--212}
}

\bib{Joy}{article}{

title={ $\delta$-anneaux et vecteurs de Witt},
author={Andr\'e Joyal},
journal={C. R. Math. Rep. Acad. Sci. Canada},
volume={7},
date={1985},
number = {2},
pages={177--182}
}

\bib{LZ}{article}{

title={De Rham-Witt cohomology for a proper and smooth morphism},
author={Andreas Langer},
author={Thomas Zink},
journal={J. Inst. Math. Jussieu},
volume={3},
number={2},
date={2004},
pages={231--314},
}

\bibitem{stacks-project}
The Stacks Project Authors, \textit{Stacks Project},
\url{http://stacks.math.columbia.edu}, retrieved January 2025.

\bib{Sul}{article}{

title={Prisms and Tambara functors I: Twisted powers, transversality, and the perfect sandwich},
author = {Yuri~J.~F.~Sulyma},
journal={arXiv:2309.03181},
date={2023},
}

\bib{Tian}{article}{

title={Finiteness and Duality for the cohomology of prismatic crystals},
author={Yichao Tian},
journal={Journal f\"ur die reine und angewandte Mathematik (Crelles Journal)},
date={2023},
volume={2023},
number={800},
pages={217--257}
}

\end{biblist}

\end{bibdiv}

\end{document}